\numberwithin{equation}{section}
\newtheorem{thm}[equation]{Theorem}
\newtheorem{lm}[equation]{Lemma}
\newtheorem{prp}[equation]{Proposition}
\theoremstyle{definition}
\newtheorem{df}[equation]{Definition}
\theoremstyle{remark}
\newtheorem{rem}[equation]{Remark}
\newcommand{\sprf}{\noindent{\it Proof.}}
\newcommand{\sqed}{\hfill\rule{1.3mm}{3mm}\medskip}
\newcounter{stareq}
\def\thestareq{\fnsymbol{stareq}}
\DeclareMathOperator{\miff}{\Longleftrightarrow}
\DeclareMathOperator{\BR}{\mathbb{R}} 
\DeclareMathOperator{\inter}{\cap}  
\newcommand{\bd}{\begin{description}}
\newcommand{\ed}{\end{description}}
\newcommand{\ol}{\overline}
\newcommand{\vep}{\varepsilon}
\begin{document}

\date{\today}

\title{Singularities and nonhyperbolic manifolds \\ do not coincide}

\author{N\'andor Sim\'anyi}

\address{The University of Alabama at Birmingham \\
Department of Mathematics \\
1300 University Blvd., Suite 452 \\
Birmingham, AL 35294 U.S.A.}

\email{simanyi@uab.edu}

\subjclass{37D50, 34D05}

\keywords{Semi-dispersing billiards, hyperbolicity, ergodicity, local
ergodicity, invariant manifolds, Chernov--Sinai Ansatz}

\begin{abstract}
We consider the billiard flow of elastically colliding hard balls on
the flat $\nu$-torus ($\nu\ge 2$), and prove that no singularity
manifold can even locally coincide with a manifold describing future
non-hyperbolicity of the trajectories. As a corollary, we obtain the
ergodicity (actually the Bernoulli mixing property) of all such
systems, i.e. the verification of the Boltzmann-Sinai Ergodic
Hypothesis.
\end{abstract}

\maketitle

\section{Introduction} \label{introduction}

In this paper we prove the Boltzmann--Sinai Ergodic Hypothesis for 
hard ball systems on the $\nu$-torus $\mathbb{R}^\nu/\mathbb{Z}^\nu$ ($\nu\ge 2$) without 
any assumed hypothesis or exceptional model.

This introduction is, to a large extent, an edited version of some
paragraphs of the introductory sections \S1 and \S2 of my paper
\cite{Sim(2009)}. For a more detailed introduction into the topic of
hard ball systems, please see these two sections of \cite{Sim(2009)}.

In a loose form, as attributed to L. Boltzmann back in the 1880's, the
Boltzmann hypothesis asserts that gases of hard balls are ergodic. In
a precise form, which is due to Ya. G. Sinai in \cite{Sin(1963)}, it
states that the gas of $N\ge 2$ identical hard balls (of "not too big"
radius) on a torus $\mathbb{T}^\nu=\mathbb{R}^\nu/\mathbb{Z}^\nu$, $\nu \ge 2$,
(a $\nu$-dimensional box with periodic boundary conditions) is ergodic, provided that certain
necessary reductions have been made. The latter means that one fixes
the total energy, sets the total momentum to zero, and restricts the
center of mass to a certain discrete lattice within the torus. The
assumption of a not too big radius is necessary to have the interior
of the arising configuration space connected.

Sinai himself pioneered rigorous mathematical studies of hard ball
gases by proving the hyperbolicity and ergodicity for the case $N=2$
and $\nu=2$ in his seminal paper \cite{Sin(1970)}, where he laid down
the foundations of the modern theory of chaotic billiards. The proofs
there were further polished and clarified in \cite{B-S(1973)}. Then
Chernov and Sinai extended these results to ($N=2$, $\nu\ge 2$), as
well as proved a general theorem on ``local'' ergodicity applicable to
systems of $N>2$ balls \cite{S-Ch(1987)}; the latter became
instrumental in the subsequent studies. The case $N>2$ is
substantially more difficult than that of $N=2$ because, while the
system of two balls reduces to a billiard with strictly convex
(spherical) boundary, which guarantees strong hyperbolicity, the gases
of $N>2$ balls reduce to billiards with convex, but not strictly
convex, boundary (the latter is a finite union of cylinders) -- and
those are characterized by a weak hyperbolicity.

Further development has been due mostly to A. Kr\'amli,
D. Sz\'asz, and the present author. We proved the hyperbolicity and
ergodicity for $N=3$ balls in any dimension \cite{K-S-Sz(1991)} by 
exploiting the ``local'' ergodic theorem of Chernov and Sinai 
\cite{S-Ch(1987)}, and carefully analyzing all
possible degeneracies in the dynamics to obtain ``global''
ergodicity. We extended our results to $N=4$ balls in
dimension $\nu \ge 3$ next year \cite{K-S-Sz(1992)}, and then I proved the
ergodicity whenever $N\le\nu$ in \cite{Sim(1992)-I} and \cite{Sim(1992)-II}.
At that point the existing methods could no longer handle any new cases, because
the analysis of the degeneracies became overly complicated. It was
clear that further progress should involve novel ideas.

A big step forward was made by D. Sz\'asz and myself, when we used the methods of
algebraic geometry in \cite{S-Sz(1999)}. We assumed that the balls had arbitrary
masses $m_1,\dots,m_N$ (but the same radius $r$). By taking the limit
$m_N\to 0$, we were able to reduce the dynamics of $N$ balls to the motion of
$N-1$ balls, thus utilizing a natural induction on $N$. Then algebro-geometric
methods allowed us to effectively analyze all possible degeneracies, but only
for typical (generic) $(N+1)$-tuples of ``external'' parameters
$(m_1,\dots,m_N,r)$; the latter needed to avoid some exceptional submanifolds
of codimension one, which remained unknown. This approach led to a proof of
full hyperbolicity (but not yet ergodicity) for all $N\ge2$ and $\nu\ge2$, and
for generic $(m_1,\dots,m_N,r)$, see \cite{S-Sz(1999)}. Later I
simplified the arguments and made them more ``dynamical'', which allowed me to
obtain full hyperbolicity for hard balls with any set of external geometric
parameters $(m_1,\dots,m_N,r)$ \cite{Sim(2002)}. The reason why the masses $m_i$
are considered \emph{geometric parameters} is that they determine the relevant
Riemannian metric
\[
||dq||^2=\sum_{i=1}^N m_i||dq_i||^2
\]
of the system. Thus, the complete hyperbolicity has been fully 
established for all systems of hard balls on tori.

To upgrade the complete hyperbolicity to ergodicity, one needs to refine the
analysis of the mentioned degeneracies. For hyperbolicity, it was enough
that the degeneracies made a subset of codimension $\ge 1$ in the phase space.
For ergodicity, one has to show that its codimension is $\ge 2$, or find
some other ways to prove that the (possibly) arising codimension-one manifolds
of non-sufficiency are not capable of separating distinct ergodic components.
In the paper \cite{Sim(2003)} I
took the first step in the direction of proving that the codimension of
exceptional manifolds is at least two: I proved that the systems of $N \ge 2$
balls on a $2$-dimensional torus are ergodic for typical (generic)
$(N+1)$-tuples of external parameters $(m_1,\dots,m_N,r)$. The proof again
involves some algebro-geometric techniques, thus the result is restricted to
generic parameters $(m_1,\dots,m_N;\,r)$.  But there was a good reason to
believe that systems in $\nu\ge 3$ dimensions would be somewhat easier to
handle, at least that was indeed the case in earlier studies.

As the next step, in the paper \cite{Sim(2004)} I was able to further improve the
algebro-geometric methods of \cite{S-Sz(1999)}, and proved that for any $N\ge 2$,
$\nu\ge 2$, and for almost every selection $(m_1,\dots,m_N;\,r)$ of the external
geometric parameters the corresponding system of $N$ hard balls on 
$\mathbb{T}^\nu$ is (completely hyperbolic and) ergodic.

Finally, in the paper \cite{Sim(2009)} I managed to prove the Boltzmann-Sinai 
Ergodic Hypothesis in full generality (i. e. without exceptional models), by assuming 
that the so called Chernov-Sinai Ansatz is true for these models.

\begin{rem}
The Chernov-Sinai Ansatz states that for almost every singular phase point $x\in\mathcal{SR}^+_0$
(with respect to the hypersurface measure of $\mathcal{SR}^+_0$) the forward orbit $S^{(0,\infty)}x$
is sufficient (geometrically hyperbolic). This is the utmost important global geometric hypothesis of
the Theorem on Local Ergodicity of \cite{S-Ch(1987)}, see also Condition 3.1 in \cite{K-S-Sz(1990)}.
\end{rem}

The only missing piece of the whole puzzle is to prove that no
open piece of a singularity manifold can precisely coincide with a
codimension-one manifold desribing the trajectories with a
non-sufficient forward orbit segment corresponding to a fixed symbolic
collision sequence. This is exactly what we prove in our Theorem below.

\section{Formulation and Proof of the Theorem} \label{main-section}

Let $U_0\subset \mathbf{M}\setminus\partial\mathbf{M}$ be an open ball, $T>0$, and assume that

\medskip

(a) $S^T(U_0)\cap \partial\mathbf{M}=\emptyset$,

\medskip

(b) $S^T$ is smooth on $U_0$.

\medskip

Next we assume that there is a \emph{codimension-one}, smooth
submanifold $J\subset U_0$ with the property that for every $x\in U_0$
the trajectory segment $S^{[0,T]}x$ is geometrically hyperbolic
(sufficient) if and only if $x\not\in J$. ($J$ is a so called
non-hyperbolicity or degeneracy manifold.) Denote the common symbolic
collision sequence of the orbits $S^{[0,T]}x$ ($x\in U_0$) by
$\Sigma=(e_1,e_2,\dots,e_n)$, listed in the increasing time
order, and let the corresponding advances be $\alpha_i=\alpha(e_i)$,
$i=1,2,\dots,n$. Let $t_i=t(e_i)$ be the time of the $i$-th collision,
$0<t_1<t_2<\dots<t_n<T$.

Finally we assume that for every phase point $x\in U_0$ the first reflection 
$S^{\tau(x)}x$ \emph{in the past} on the orbit of $x$ is a singular reflection
(i. e. $S^{\tau(x)}x \in \mathcal{SR}^+_0$) if and only if $x$ belongs to a codimension-one,
smooth submanifold $K$ of $U_0$. For the definition of the manifold of singular
reflections $\mathcal{SR}^+_0$ see, for instance, the end of \S1 in \cite{Sim(2009)}.

\begin{thm}
Using all the assumtions and notations above, the submanifolds $J$ and $K$ of $U_0$
do not coincide.
\end{thm}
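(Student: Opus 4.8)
The plan is to argue by contradiction: assume that $J=K$ on a (possibly smaller) neighborhood of some point $x_0\in U_0$, and derive a contradiction from the two radically different \emph{mechanisms} that create $J$ and $K$. The manifold $J$ of non-hyperbolicity is governed entirely by the forward dynamics $S^{[0,T]}$: membership in $J$ is equivalent to the vanishing of a certain quadratic form (the sum of the projections of the collision normals onto the neutral subspace, or equivalently the degeneracy of the relevant symmetric operator built from the free-flight and collision maps along $\Sigma=(e_1,\dots,e_n)$), and this depends only on the configuration/velocity data \emph{read off at times $0\le t\le T$}. The manifold $K$, by contrast, is defined by a condition on the \emph{backward} orbit, namely that the first past reflection $S^{\tau(x)}x$ lands on $\mathcal{SR}^+_0$; its defining function involves the pre-history $S^{(\tau(x),0)}x$, data that the forward segment $S^{[0,T]}x$ is completely free to vary independently of.

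First I would set up the local coordinates on $U_0$ adapted to the symbolic sequence $\Sigma$, so that the advances $\alpha_1,\dots,\alpha_n$ become smooth functions and the defining equation of $J$ is an explicit analytic expression $F_J=0$ in these coordinates; here I would invoke the standard description of sufficiency from \cite{S-Ch(1987)}, \cite{Sim(2009)} and the quadratic-form criterion used throughout the subject. Next I would produce, through $x_0$, a smooth curve (or a small disk) inside $U_0$ along which the \emph{forward} combinatorial and geometric data $S^{[0,T]}x$ stay essentially constant — in particular $F_J$ is constant along it — while the backward orbit is perturbed enough to move $S^{\tau(x)}x$ transversally across $\mathcal{SR}^+_0$. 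The existence of such a curve is a consequence of the fact that $\tau(x)>0$ strictly (the first past reflection happens before time $0$, hence outside the controlled window $[0,T]$), together with the absence of constraints linking the two time-directions: one can freely wiggle the trajectory in the past. Along this curve $J$ would be forced to be simultaneously all of it (if $x_0\in J$) and a proper codimension-one slice of it (if $x_0\notin J$, by the behavior of $K$), which is the contradiction.

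The main obstacle, and the place where genuine work is needed, is to make precise and to verify the \emph{independence} claim — that one really can vary the past history while freezing the forward segment to first order. This requires a careful analysis of which collisions in $\Sigma$ are ``shared'' with the relevant past trajectory data and a dimension count showing that the locus where the two defining functions $F_J$ and $F_K$ have proportional gradients is non-generic; equivalently, one must rule out a hidden dynamical coincidence forcing $\nabla F_J \parallel \nabla F_K$ everywhere on an open set. I expect to handle this by exploiting the analyticity of the billiard flow on the smooth part (so that coincidence on an open set would propagate, and can then be contradicted by exhibiting a single point where the two conditions demonstrably decouple, e.g.\ a point whose forward orbit is sufficient but whose first past reflection is non-singular, or vice versa), plus the combinatorial observation that the symbolic sequence $\Sigma$ together with the ``first past reflection'' marker $e_0$ carries strictly more information than $\Sigma$ alone. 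Assembling these pieces — the quadratic-form description of $J$, the transversal perturbation of $K$, and the analytic-continuation/decoupling argument — completes the proof.
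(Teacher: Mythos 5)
There is a genuine gap, and it sits exactly where you flag ``the main obstacle.'' Your central claim --- that the forward segment $S^{[0,T]}x$ and the past history are ``completely free to vary independently,'' so that one can wiggle the trajectory in the past while freezing the forward data --- is false in the strong form you state it (the phase point $x$ determines its entire orbit in both time directions, so freezing $S^{[0,T]}x$ freezes $S^{\tau(x)}x$ as well), and in the weaker form you actually need (move within a level set of $F_J$ while crossing $K$ transversally) it is \emph{equivalent} to the statement $\nabla F_J\not\parallel\nabla F_K$, i.e.\ to the theorem itself. Your fallback --- analytic continuation plus ``exhibiting a single point where the two conditions demonstrably decouple'' --- does not close this: a point whose forward orbit is sufficient and whose first past reflection is non-singular lies off both $J$ and $K$ and is perfectly compatible with $J=K$ (both are codimension one); what you would need is a point of $J\setminus K$ or $K\setminus J$, and no mechanism for producing one is given. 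So the proposal is circular at its decisive step: the ``absence of constraints linking the two time-directions'' is precisely what must be proved, not assumed.

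For comparison, the paper's proof supplies the missing mechanism by exploiting the assumed coincidence $J=K$ rather than trying to decouple the two conditions abstractly. It first localizes the non-hyperbolicity condition: via the Connecting Path Formula there is a critical collision index $n_0$ such that $x\in J$ iff the pair $r(x)=\bigl(v^-_{i(n_0)}-v^-_{j(n_0)},\,v^+_{i(n_0)}-v^+_{j(n_0)}\bigr)$ lies in a hyperplane $H$ determined by the earlier relative-velocity directions (Proposition \ref{belongs-to-hyperplane}, supported by Lemmas \ref{nzero-essential} and \ref{no-distinguish}). Then, using $J=K$, it takes a point of $J$, flows back to its singular reflection $y^*\in\mathcal{SR}^+_0$, and perturbs \emph{inside the singularity manifold} along a neutral vector $w_0$ chosen in $\mathcal{N}_0(\dots,g_{n_0-1})\setminus\mathcal{N}_0(\dots,g_{n_0})$; this keeps the first $n_0-1$ collisions' velocity directions (hence $H$) fixed while making the incoming lines at $e_{n_0}$ rotate about a point or roll tangentially along an ellipse. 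The elementary geometric Propositions \ref{no-hyperplane} and \ref{counterpart_geometry} then show the resulting pairs $(\ol{v}(s),\ol{v}^+(s))$ cannot all lie in one hyperplane, contradicting the membership criterion and hence $J=K$. None of this structure --- the critical index, the hyperplane characterization, the curve $\gamma_0(s)\subset\mathcal{SR}^+_0$, or the conic-reflection geometry --- is present in your outline, and without some substitute for it the contradiction you aim for cannot be reached.
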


The rest of this section is devoted to the proof of this theorem. It
will be a proof by contradiction, so from now on we assume that $J=K$,
and the proof will be subdivided into several lemmas and propositions.

First of all, we assume that the center $x_0$ of the open ball $U_0$
belongs to the exceptional set $J$. During the indirect proof of the
theorem, smaller and smaller open balls $U_0$ will be selected to
guarantee a regular (smooth and homogeneous) behavior. We note that this can be done,
thanks to the algebraic nature of the dynamics.

Observe that the sufficiency of the orbit segments $S^{[0,T]}x$  ($x\in U_0\setminus J$) immediately implies that the collision
graph $\mathcal{G}=\left(\{1,2,\dots,N\},\, \{e_1,e_2,\dots,e_n\}\right)$ is connected on the vertex set
$\mathcal{V}=\{1,2,\dots,N\}$. Therefore, according to Lemma 2.13 of [Sim(1992)-II], the linear map 
\[
\Phi:\; \mathcal{N}_0\left(S^{[0,T]} x\right)\to \mathbb{R}^n
\]
defined by $\left(\Phi(w)\right)_i=\alpha_i(w)$ ($i=1,2,\dots,n$) is a
linear embedding for every $x\in U_0$. Here $\mathcal{N}_0(S^{[0,T]}x)$ denotes the neutral linear space of the
trajectory segment $S^{[0,T]}x$, see Definition 2.5 of \cite{Sim(2009)}. The image $\Phi\left(\mathcal{N}_0(S^{[0,T]}x)\right)$
will be denoted by $\ol{\mathcal{N}}_0(S^{[0,T]}x)$. The sufficiency (geometric hyperbolicity)
of a trajectory segment $S^{[0,T]}x$ means that the dimension of the neutral linear space $\mathcal{N}_0(S^{[0,T]}x)$
takes the minimum possible value $1$, see Definition 2.7 in \cite{Sim(2009)}.
Moreover, let $1=k(1)<k(2)<\dots<k(N-1)<n$ be the uniquely defined
indices with the property that for every $l$ ($1\le l\le N-1$) the
collision graph $\left(\mathcal{V},\, \{e_1,e_2,\dots,e_{k(l)}\}\right)$
has exactly $N-l$ connected components, whereas the number of
components of $\left(\mathcal{V},\, \{e_1,e_2,\dots,e_{k(l)-1}\}\right)$ is
$N-l+1$.

We shall call the edges (collisions) $e_{k(1)},\dots,e_{k(N-1)}$ {\it essential}.

For every {\it non-essential edge} $e_m=\{i(m),j(m)\}$ ($1\le i(m)<j(m)\le N$) we express the relative displacement 
\[
\Delta q^-_{i(m)}(t_m)-\Delta q^-_{j(m)}(t_m) =\alpha_m\left[v^-_{i(m)}(t_m)-v^-_{j(m)}(t_m)\right]
\]
as a linear combination of relative velocities of earlier collisions $e_1, e_2,\dots,e_{m-1}$
(with coefficients made up from some masses and advances) precisely as described by the CPF,
see Proposition 2.19 in \cite{S-Sz(1999)}:

\begin{equation}\label{CPF}
\alpha_m\left[v^-_{i(m)}(t_m)-v^-_{j(m)}(t_m)\right]=\sum_{k=1}^{m-1} \alpha_k \Gamma_k^{(m)}
\end{equation}
($1\le m\le n$, $e_m$ is not essential), where each $\Gamma_k^{(m)}$ is a linear combination of the relative velocities
$v^-_{i(k)}-v^-_{j(k)}$ and $v^+_{i(k)}-v^+_{j(k)}$, and the coefficients in these linear combinations are fractional linear expressions
of the masses $m_{i(k)}$ and $m_{j(k)}$, see the CPF as Proposition 2.19 in \cite{S-Sz(1999)}. 
We observe that the solution set of the system of all equations (\ref{CPF})
(taken for all $m$ with a non-essential edge $e_m$) is precisely the linear space 
$\Phi\left(\mathcal{N}_0(S^{[0,T]} x)\right)=\ol{\mathcal{N}}_0(S^{[0,T]} x)$, having the same dimension as the neutral space 
$\mathcal{N}_0(S^{[0,T]} x)$, $x\in U_0$.

As follows, we are presenting an indirect proof (a proof by contradiction) by assuming that the nonhyperbolicity manifold $J$ coincides 
with a past singularity so that no collision takes place between the mentioned singularity and $J$. (Otherwise those collisions
between the singularity and $J$ could be added to the symbolic sequence $\Sigma=(e_1,e_2,\dots,e_n))$ as an initial segment.)

Throughout the proof we shall assume that the masses of the
elastically interacting balls are equal: $m_1=m_2=\dots=m_N$. As a
matter of fact, this assumption is not a serious restriction of
generality: it is merely a technical-notational assumption, and the
reader can easily re-write the present proof to cover the general case
of arbitrary masses. We denote by $d=\nu(N-1)$ the dimension of the
configuration space $\mathbf{Q}$.

\medskip

Following the ideas and notations of \S3 of \cite{S-Sz(2000)}, we introduce the following notions and notations.

\medskip

With every collision $e_k=(i(k),\, j(k))$ ($1\le k\le n$, $1\le i(k)<j(k)\le N$) we associate the real projective space
$\mathcal{P}\cong \mathbb{RP}(\nu-1)$ of all orthogonal reflections of the common tangent space 
\begin{equation} \label{tangent-space}
\mathcal{Z}=\mathcal{T}\mathbf{Q}=\mathcal{T}_q\mathbf{Q} \\
=\left\{(\delta q_1,\dots,\delta q_N)\in(\mathbb{R}^\nu)^N \big|\;
\sum_{i=1}^N \delta q_i=0\right\}\cong\mathbb{R}^d
\end{equation}
across all possible tangent hyperlanes $H$ of the cylinder $C_{e_k}$ corresponding to the collision $e_k$. In this way we obtain a map
\begin{equation} \label{definition-Phi}
\Phi:\; S^{d-1}\times \prod_{k=1}^n \mathcal{P}_k \to S^{d-1}
\end{equation}
which assignes to every $(n+1)$-tuple 
\[
(V_0;\, g_1,g_2,\dots,g_n)\in S^{d-1}\times\prod_{k=1}^n \mathcal{P}_k
\]
the image velocity $V_n=V_0g_1g_2\dots g_n$ of $V_0$ under the composite action $g_1g_2\dots g_n$. (Here, by convention, the composition
is carried out from the left to the right, and $S^{d-1}$ denotes the unit sphere of $\mathcal{Z}$ in \ref{tangent-space}.)
The space $M_n=S^{d-1}\times\prod_{k=1}^n \mathcal{P}_k$ is called the phase space of the virtual velocity process 
$(V_0,V_1,\dots,V_n)$, where $V_k=V_0g_1g_2\dots g_k$. Clearly, the velocity process $(V_0,V_1,\dots,V_n)$ uniquely determines the
sequence of reflections $g_1,g_2,\dots,g_n$. For any $x\in M_n$ or $x\in U_0$ we denote the velocity $V_k$ by $V_k(x)$. Similarly,
$v^+_{i(k)}-v^+_{j(k)}$ denotes the relative velocity of the colliding particles $i(k)$ and $j(k)$ right after the collision
$e_k=(i(k),\, j(k))$ ($1\le i(k)<j(k)\le N$), and the definition of the pre-collision relative velocity $v^-_{i(k)}-v^-_{j(k)}$ is 
analogous, $k=1,2,\dots,n$. Thus we get a natural projection
\begin{equation} \label{projection}
\Pi:\; U_0\to M_n
\end{equation}
by taking $\Pi(x)=(V_0(x);\, g_1(x),\dots,g_n(x))$ for $x=(q(x),\, v(x))\in U_0$, where $V_0(x)=v(x)$.

What is coming up is a local analysis in a small, open ball neighborhood $B_0\subset M_n$ of the base point
$(V_0(x_0);\, g_1(x_0),\dots,g_n(x_0))$. We begin with a useful definition.

\begin{df} \label{projection-definition}
The projections $R_k:\; \mathcal{Z}\to \mathbb{R}^\nu$ ($k=1,2,\dots,n$) are defined by the equation
\[
R_k(\delta q)=\delta q_{i(k)}-\delta q_{j(k)}
\]
for $\delta q\in \mathcal{Z}$, where $\mathcal{Z}$ is the tangent space of $\mathbf{Q}$ in (\ref{tangent-space}).
\end{df}

The Connecting Path Formula (\ref{CPF}) together with the results of \cite{S-Sz(2000)} and \cite{Sim(2002)} yield the following
results.

\begin{prp} \label{DDP}
For any integer $m$, $2\le m\le n$, the neutral space 
\[
\mathcal{N}_{t_1+0}(V_0;\, g_1,g_2,\dots,g_{m})=\mathcal{N}_{1}(V_0;\, g_1,g_2,\dots,g_{m})
\]
is determined by the \emph{directions} of all relative velocities $v^-_{i(l)}-v^-_{j(l)}$,
$v^+_{i(l)}-v^+_{j(l)}$ ($2\le l\le m-1$), and by the directions of $v^+_{i(1)}-v^+_{j(1)}$ and
$v^-_{i(m)}-v^-_{j(m)}$. \emph{This property will be called the Direction Determination Principle, or DDP.}
As a consequence, all the neutral spaces 
\[
\mathcal{N}_{k}=\mathcal{N}_{k}(V_0;\, g_1,g_2,\dots,g_{m})=\mathcal{N}_{t_k+0}(V_0;\, g_1,g_2,\dots,g_{m})
\] 
($0\le k\le m$) are determined by the relative velocities listed above and by $v^-_{i(1)}-v^-_{j(1)}$,
$v^+_{i(m)}-v^+_{j(m)}$. We note that the neutral spaces $\mathcal{N}_{k}$ are connected to each other via
the equations $\mathcal{N}_{l}=\mathcal{N}_{k}\cdot g_{k+1}\cdot\dots\cdot g_l$ for $k<l$, and
the reflection $g_s$ is (locally) determined by the directions of the relative velocities
$v^-_{i(s)}-v^-_{j(s)}$ and $v^+_{i(s)}-v^+_{j(s)}$, $1\le s\le m$.
\end{prp}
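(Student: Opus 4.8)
The plan is to argue by induction on $m$, using the Connecting Path Formula (\ref{CPF}) as the workhorse. The key observation is that the neutral space $\mathcal{N}_1(V_0;\,g_1,\dots,g_m)$ is, by the discussion following (\ref{CPF}), precisely the solution set of the system of CPF equations taken over all non-essential edges $e_\ell$ with $\ell\le m$. Each such equation reads $\alpha_\ell R_\ell(\delta q) = \alpha_\ell\bigl[v^-_{i(\ell)}-v^-_{j(\ell)}\bigr]$ on the left, and a linear combination $\sum_{k<\ell}\alpha_k\Gamma_k^{(\ell)}$ on the right, where each $\Gamma_k^{(\ell)}$ is built from the relative velocities $v^\pm_{i(k)}-v^\pm_{j(k)}$ with coefficients that are fractional-linear in the masses. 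Since we have imposed $m_1=\dots=m_N$, those coefficients are absolute constants, so the entire right-hand side depends on the process $(V_0;\,g_1,\dots,g_m)$ only through the relative velocities $v^\pm_{i(k)}-v^\pm_{j(k)}$.

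First I would isolate the role of \emph{lengths} versus \emph{directions}. Rescaling $\alpha_\ell$ absorbs the length of $v^-_{i(\ell)}-v^-_{j(\ell)}$ on the left, so only the direction of that vector matters for that equation; likewise, in each $\Gamma_k^{(\ell)}$ the length of $v^\pm_{i(k)}-v^\pm_{j(k)}$ can be absorbed into the free advance parameters $\alpha_k$, so again only directions survive. The one subtlety is the boundary collisions $e_1$ and $e_m$: the edge $e_1$ is always essential (it is $k(1)$), so no CPF equation carries $v^-_{i(1)}-v^-_{j(1)}$ on its left, and the relevant datum is $v^+_{i(1)}-v^+_{j(1)}$, whose direction enters the various $\Gamma_1^{(\ell)}$; dually, for $e_m$ the outgoing velocity $v^+_{i(m)}-v^+_{j(m)}$ does not yet appear in any equation indexed by $\ell\le m$, so the only datum at the last collision is the direction of $v^-_{i(m)}-v^-_{j(m)}$ (and only if $e_m$ is non-essential). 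This is exactly the asymmetric list in the statement, and tracking which endpoint contributes an incoming versus an outgoing direction is where the bookkeeping must be done carefully.

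For the ``as a consequence'' clause I would use the transport identities $\mathcal{N}_\ell = \mathcal{N}_k\cdot g_{k+1}\cdots g_\ell$ for $k<\ell$: once $\mathcal{N}_1$ is known and each reflection $g_s$ is recovered, all $\mathcal{N}_k$ follow. To recover $g_s$ locally: the hyperplane $H_s$ of reflection is spanned together with the direction $R_s^{-1}$ of $v^-_{i(s)}-v^-_{j(s)}$ mapping to the direction of $v^+_{i(s)}-v^+_{j(s)}$, and since $g_s$ is an orthogonal reflection of $\mathcal{Z}$ fixing the orthocomplement of $R_s$, knowing that it sends the pre-collision relative-velocity direction to the post-collision one pins it down (this is the content of the collision law; the geometry is that of \cite{S-Sz(2000)}, \S3, and \cite{Sim(2002)}). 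To pass from $\mathcal{N}_1$ at time $t_1+0$ down to $\mathcal{N}_0$ at time $0$ one applies $g_1^{-1}=g_1$, which by the same argument is determined by the directions of $v^\pm_{i(1)}-v^\pm_{j(1)}$; this is why both $v^-_{i(1)}-v^-_{j(1)}$ and $v^+_{i(1)}-v^+_{j(1)}$ appear in the second list but only $v^+_{i(1)}-v^+_{j(1)}$ in the first.

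The main obstacle I anticipate is not the linear algebra per se but verifying that the CPF coefficients genuinely depend on nothing beyond the stated directions — in particular, confirming that the ``advances'' appearing inside the $\Gamma_k^{(\ell)}$ coefficients (as opposed to the free advance parameters $\alpha_k$ being solved for) are themselves functions of the geometry already accounted for, so that no hidden dependence on the \emph{positions} $q$ or on velocity \emph{lengths} sneaks back in. This is a matter of reading Proposition~2.19 of \cite{S-Sz(1999)} closely and checking that, under equal masses, every structural constant is literally a constant; I would want to state this as a short sub-lemma before running the induction.
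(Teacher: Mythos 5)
Your route through the Connecting Path Formula is genuinely different from the paper's argument, which never touches (\ref{CPF}) at this point: the paper proves Proposition~\ref{DDP} in one line by characterizing membership directly, namely $\delta q\in\mathcal{N}_1(V_0;\,g_1,\dots,g_m)$ if and only if $R_1(\delta q)$ is parallel to $v^+_{i(1)}-v^+_{j(1)}$ and, for every $k$ with $2\le k\le m$, the transported vector $R_k(\delta q\cdot g_2\cdots g_{k-1})$ is parallel to $v^-_{i(k)}-v^-_{j(k)}$; since parallelism sees only directions and each $g_s$ is locally determined by the directions of $v^\pm_{i(s)}-v^\pm_{j(s)}$, the stated (asymmetric) list of directions suffices. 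Measured against that, your proposal has two concrete gaps. First, you identify $\mathcal{N}_1$ with ``the solution set of the CPF equations,'' but that solution set lives in advance coordinates: the paper states explicitly that it equals $\Phi(\mathcal{N}_0)=\overline{\mathcal{N}}_0\subset\mathbb{R}^n$, which is only linearly isomorphic to the neutral space. The proposition is about the neutral space as a subspace of $\mathcal{Z}$ (this is what is used later, e.g.\ through the projections $R_{n_0}$ in Lemma~\ref{no-distinguish} and Proposition~\ref{good_neutral}), so after determining $\overline{\mathcal{N}}$ you still must reconstruct $\delta q$ from the advances; that reconstruction uses the relative-velocity data at the essential collisions and the reflections $g_k$, and you would have to check that it, too, needs only the directions in the stated list (with lengths absorbed by rescaling advances). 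Moreover, for intermediate $m$ the collision graph of $(e_1,\dots,e_m)$ need not be connected, so $\Phi$ is not even injective there and rigid translations of components must be accounted for separately.

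Second, and more seriously, the boundary bookkeeping you defer is precisely the nontrivial content of the statement, and the justification you sketch does not support it: you claim that at $e_1$ ``the relevant datum is $v^+_{i(1)}-v^+_{j(1)}$, whose direction enters the various $\Gamma_1^{(\ell)}$,'' but the CPF as quoted in the paper allows each $\Gamma_1^{(\ell)}$ to be a combination of \emph{both} $v^-_{i(1)}-v^-_{j(1)}$ and $v^+_{i(1)}-v^+_{j(1)}$, so nothing in your argument excludes a dependence of $\mathcal{N}_1$ on the direction of $v^-_{i(1)}-v^-_{j(1)}$. Eliminating it requires working at the reference time $t_1+0$ (or trading $v^-_{i(1)}-v^-_{j(1)}$ for $v^+_{i(1)}-v^+_{j(1)}$ through the reflection $g_1$, i.e.\ reading the neutrality at $e_1$ after the collision) --- exactly what the paper's parallelism characterization builds in from the start, and what your proposal acknowledges as ``where the bookkeeping must be done carefully'' without carrying it out. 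Until that reconstruction step and the $e_1$/$e_m$ asymmetry are actually verified, the proposal does not yet prove the proposition; once they are, your CPF route would work, but it is considerably longer than the paper's direct observation.
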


\begin{proof}
Observe that for any tangent vector $\delta q=(\delta q_1,\dots,\delta q_N)\in\mathcal{Z}$ the relation 
$\delta q\in \mathcal{N}_1(V_0;\, g_1,\dots,g_{m})$ holds true if and only if for every $k$, $2\le k\le m$, the vector
$R_k(\delta q\cdot g_2\cdot g_3\cdot\dots\cdot g_{k-1})$ is parallel to the relative velocity vector 
$v^-_{i(k)}-v^-_{j(k)}$, and $R_1(\delta q)$ is parallel to $v^+_{i(1)}-v^+_{j(1)}$.
\end{proof}

\begin{prp}\label{variations}
Use the notions and notations of the previous proposition, except that here we allow the values $0$ and $1$ for the number of collisions
$m$. We claim that for fixed \emph{directions} of all relative velocities $v^-_{i(k)}-v^-_{j(k)}$ and $v^+_{i(k)}-v^+_{j(k)}$ ($k=1,2,\dots,m$)
and for a given reference time $t_s+0$ ($0\le s\le m$, $t_0=0$) all possible space and velocity variations $\delta q$ and $\delta v$ are
precisely the elements of the neutral space $\mathcal{N}_{s}(V_0;\, g_1,g_2,\dots,g_{m})$.
\end{prp}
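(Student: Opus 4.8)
The plan is to establish the claimed equality by two inclusions, using the characterization of the neutral space supplied by the Direction Determination Principle (Proposition~\ref{DDP}) together with the first-order (variational) dynamics of the flow along the segment $S^{[0,T]}x$. Throughout, a variation prescribed at the reference time $t_s+0$ is a tangent vector $(\delta q,\delta v)\in\mathcal{Z}\times\mathcal{Z}$ to the phase space at the point $S^{t_s}x$, and its image under $d\Pi$ feeds into the virtual velocity process built in \eqref{definition-Phi}.

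First I would record how such a variation propagates along the orbit. On each free-flight interval the configuration part evolves by $\delta q\mapsto\delta q+\tau\,\delta v$ while the velocity part $\delta v$ is constant, and across each collision $e_k$ the pair is transformed by the derivative of the collision map, whose velocity component is the reflection $g_k$; since the $g_k$ are involutions, the same reflections govern both forward propagation (to collisions $e_k$ with $k>s$) and backward propagation (to collisions with $k\le s$). This produces, just before each $e_k$, a propagated relative displacement and a propagated relative velocity perturbation obtained by applying the intervening reflections; for $k>s$ the displacement takes the form $R_k(\delta q\cdot g_{s+1}\cdots g_{k-1})$, matching the vectors that appear in the proof of Proposition~\ref{DDP}.

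Next I would convert the hypothesis of fixed relative-velocity directions into linear conditions and match them to the defining conditions of the neutral space. The direction of $v^-_{i(k)}-v^-_{j(k)}$ is unchanged to first order exactly when the propagated relative velocity perturbation at $e_k$ has no component transverse to $v^-_{i(k)}-v^-_{j(k)}$, and likewise for the post-collision relative velocities. Coupling this with the fact that the reflection $g_k$ itself is governed by these same directions (the content of the DDP), these conditions on the variation are seen to be equivalent to the parallelism requirements $R_k(\delta q\cdot g_{s+1}\cdots g_{k-1})\parallel v^-_{i(k)}-v^-_{j(k)}$ characterizing $\mathcal{N}_1$, and hence, by propagation through the relations $\mathcal{N}_l=\mathcal{N}_k\cdot g_{k+1}\cdots g_l$, the space $\mathcal{N}_s$. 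Reading this at the reference time $t_s+0$ yields both inclusions: an admissible variation lies in $\mathcal{N}_s$, and every element of $\mathcal{N}_s$ is realized by an admissible $(\delta q,\delta v)$.

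Finally I would dispose of the two extra cases $m=0$ and $m=1$ admitted here but excluded from Proposition~\ref{DDP}: for $m=0$ there are no collision constraints, every variation is admissible, and $\mathcal{N}_0$ is the full tangent space; for $m=1$ only the single condition at $e_1$ survives and is verified directly. The step I expect to be the main obstacle is the coupling between the configuration and velocity parts of the variation. Because each reflection $g_k$ depends on the location of the collision, a configuration variation $\delta q$ that displaces the $k$-th collision point perturbs $g_k$ and thereby feeds back into the transformed $\delta v$; hence ``fixing the relative-velocity directions'' is not a condition on $\delta v$ alone, and one must show that this feedback is precisely the one already encoded in the DDP. Once that identification is in place---so that the $(\delta q,\delta v)$ compatible with fixed directions are parametrized by the same linear space that the DDP attaches to the configuration datum---the isometry property of the reflections $g_k$ closes the argument.
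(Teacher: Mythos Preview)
Your approach is a direct two-inclusion argument via propagation and the DDP, whereas the paper gives a very short induction on $m$, always choosing the reference time $s=m-1$. In the inductive step the paper simply observes: by the induction hypothesis the constraints coming from $e_1,\dots,e_{m-1}$ force both $\delta q$ and $\delta v$ (at time $t_{m-1}+0$) into $\mathcal{N}_{m-1}(V_0;g_1,\dots,g_{m-1})$; then fixing the direction of $v^-_{i(m)}-v^-_{j(m)}$ is a constraint on $\delta v$ alone and cuts it down to $\mathcal{N}_{m-1}(V_0;g_1,\dots,g_m)$, while additionally fixing the direction of $v^+_{i(m)}-v^+_{j(m)}$ fixes $g_m$ and hence constrains $\delta q$ to the same space. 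The induction, by placing the reference time immediately before the last collision, reduces the ``coupling'' you worry about to a single free-flight step, where it is transparent.

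Your route could be made to work, but as written it has a gap exactly at the point you flag as the main obstacle. You pass from ``the direction of $v^-_{i(k)}-v^-_{j(k)}$ is unchanged'' (a condition on the propagated \emph{velocity} perturbation) directly to the parallelism requirement $R_k(\delta q\cdot g_{s+1}\cdots g_{k-1})\parallel v^-_{i(k)}-v^-_{j(k)}$ (a condition on the propagated \emph{position} perturbation) without justifying the switch; these are two genuinely different linear conditions, and the proposition asserts that \emph{each} of $\delta q$ and $\delta v$ separately ranges over $\mathcal{N}_s$. Your final sentence (``the isometry property of the reflections $g_k$ closes the argument'') does not resolve this: isometry of $g_k$ plays no role here, what matters is the clean separation the paper exploits --- the $v^-$ direction pins down $\delta v$, and the pair $(v^-,v^+)$ (equivalently $g_m$, equivalently the collision normal) pins down $\delta q$. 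If you want to keep the direct approach, you should make this separation explicit at every collision and then check that the resulting system of linear conditions on $\delta v$, respectively on $\delta q$, is exactly the defining system of $\mathcal{N}_s$; otherwise the induction in the paper is both shorter and avoids the bookkeeping of propagating through all collisions simultaneously.
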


\begin{proof}
Induction on $m$. The statement is obviously true for $m=0$, since in this case $\mathcal{N}_{0}(V_0;\, g_1,g_2,\dots,g_{m})=\mathcal{Z}$,
the tangent space of the configuration space.

Assume now that $m\ge1$ and the claim is true for all smaller numbers of collisions. Clearly it is enough to prove the proposition 
for the case $s=m-1$. The fixed directions of $v^-_{i(k)}-v^-_{j(k)}$ and $v^+_{i(k)}-v^+_{j(k)}$ for $k=1,2,\dots,m-1$ mean that the possible
values of either $\delta q$ or $\delta v$ are precisely the elements of the neutral space
$\mathcal{N}_{m-1}(V_0;\, g_1,g_2,\dots,g_{m-1})$. If, in addition, we also fix the direction of $v^-_{i(m)}-v^-_{j(m)}$, then this leaves for us 
the space $\mathcal{N}_{m-1}(V_0;\, g_1,g_2,\dots,g_{m})$ as the set of all available values for $\delta v$. Furthermore, by also fixing the
direction of $v^+_{i(m)}-v^+_{j(m)}$ (i. e. also fixing the reflection $g_m$) restricts the space of available values for $\delta q$ to the neutral
space $\mathcal{N}_{m-1}(V_0;\, g_1,g_2,\dots,g_{m})$.
\end{proof}

\begin{prp}\label{typ-dim}
For every $m$, $1\le m\le n$, the generic ($\miff$minimal) dimension (both in measure-theoretical and topological senses) 
of the neutral spaces 
\[
\mathcal{N}_0(V_0;\, g_1,\dots,g_{m})
\]
on the phase space $M_{m}$ is equal to the generic ($\miff$minimal) value of
\[
\text{dim}\mathcal{N}_0(V_0(x);\, g_1(x),g_2(x),\dots,g_{m}(x))
\]
for all $x\in U_0$. (Key Lemma 3.19 in \cite{Sim(2002)}.)
\end{prp}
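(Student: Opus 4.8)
The plan is to note first that the neutral-space dimension is, on both spaces in play, an upper semicontinuous integer-valued function --- so that ``generic'' really does mean ``minimal'', the common value being attained on an open dense set (in fact on the complement of a proper algebraic subset, hence on a set of full measure) --- and then to verify that the billiard-realisable data already realise the generic virtual behaviour. Composing the projection $\Pi$ of~(\ref{projection}) with the coordinate projection $M_n\to M_m$ onto the factors $S^{d-1},\mathcal{P}_1,\dots,\mathcal{P}_m$ gives a map $\Pi_m:U_0\to M_m$ with $\Pi_m(x)=(V_0(x);g_1(x),\dots,g_m(x))$, so that the right-hand quantity in the proposition is literally $\dim\mathcal{N}_0(\Pi_m(x))$. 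By the Connecting Path Formula~(\ref{CPF}) and the remark following it, for each $y\in M_m$ the space $\mathcal{N}_0(y)$ is the solution space of a finite linear system whose coefficients depend real-algebraically on $y$; hence $y\mapsto\dim\mathcal{N}_0(y)$ is upper semicontinuous, its generic value equals $\delta^{*}:=\min_{M_m}\dim\mathcal{N}_0$, and this value is attained on the complement $G_m\subset M_m$ of a proper algebraic subvariety. Pulling back along the continuous map $\Pi_m$, the function $x\mapsto\dim\mathcal{N}_0(\Pi_m(x))$ is upper semicontinuous on $U_0$ with generic value $\delta_0:=\min_{x\in U_0}\dim\mathcal{N}_0(\Pi_m(x))$. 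Since $\Pi_m(U_0)\subseteq M_m$ we get $\delta_0\ge\delta^{*}$ for free, so the whole content of the proposition is the reverse inequality $\delta_0\le\delta^{*}$; equivalently, that $\Pi_m$ carries some point of $U_0$ into the generic stratum $G_m$.

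To exhibit such a point I would first shrink $U_0$ about $x_0$ --- permissible, as the paper notes, thanks to the algebraic nature of the dynamics --- so that the successive collision maps on $U_0$ are diffeomorphisms and the set $W_m:=\Pi_m(U_0)$ satisfies the relations~(\ref{CPF}) identically. The point is that $W_m$, although usually of positive codimension in $M_m$ (a dimension count gives $\dim W_m\le\dim U_0=2d-1$, whereas $\dim M_m=(d-1)+m(\nu-1)$ may be far larger), is nonetheless rich enough to meet $G_m$. Here I would use that there are only $N-1$ essential collisions $e_{k(1)},\dots,e_{k(N-1)}$, so that their impact parameters account for only $(N-1)(\nu-1)<\nu(N-1)=d$ degrees of freedom: by Propositions~\ref{DDP} and~\ref{variations} the relative-velocity directions of a real trajectory vary, as $x$ runs over $U_0$, transversally to $\mathcal{N}_0$, and, using that each essential collision merges two hitherto disjoint connected components of the collision graph, one can perturb the initial configuration so as to place the essential collisions in arbitrary general position without disturbing the earlier collisions. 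It then remains to verify that general position of the essential collisions alone already forces the linear system~(\ref{CPF}) to have maximal rank, that is, $\dim\mathcal{N}_0=\delta^{*}$, irrespective of the CPF-determined behaviour of the non-essential collisions; this is precisely the content of Key Lemma~3.19 of \cite{Sim(2002)}, which I would either invoke directly or reprove by the degeneration and dimension arguments of \cite{S-Sz(1999)} and \cite{Sim(2002)}.

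The main obstacle is this last step. Because $W_m$ is genuinely thinner than $M_m$, the soft ``an open set meets an open dense set'' argument is unavailable, and one must rule out that the CPF-imposed dependence of the non-essential collisions on the essential ones conspires to keep the rank of~(\ref{CPF}) below its generic value everywhere on $W_m$. What makes this tractable is that the defining relations of $W_m$ \emph{are} the relations~(\ref{CPF}) --- precisely the relations that cut out $\overline{\mathcal{N}}_0$ --- so that they are honest identities rather than extra degeneracies, and cannot raise $\dim\mathcal{N}_0$ above its minimum over $M_m$; but turning this heuristic into a proof requires controlling the coefficients $\Gamma^{(m)}_{k}$ of~(\ref{CPF}) along $W_m$ and showing that they do not drop its rank, which is the algebraic-geometric heart of \cite{Sim(2002)}. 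A secondary technical point to handle carefully is the assertion that the $N-1$ essential impact parameters can be perturbed \emph{independently} while leaving the earlier collisions intact: here one uses the component-merging property of the essential collisions to steer one side of each such collision freely, shrinking $U_0$ once more if necessary to keep the collision combinatorics $\Sigma$ rigid under these perturbations.
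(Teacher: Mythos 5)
Your proposal is correct and in substance coincides with what the paper does: the paper offers no argument of its own for this proposition, simply citing Key Lemma 3.19 of \cite{Sim(2002)}, and your reduction (upper semicontinuity of the algebraically defined dimension, the trivial inequality from $\Pi_m(U_0)\subseteq M_m$, and the hard reverse inequality) isolates exactly that lemma as the only nontrivial step, which you then invoke. Your sketched alternative via perturbing the essential collisions is, as you yourself note, only heuristic, but since you fall back on the citation the argument stands as the paper's does.
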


The value of this typical dimension will be denoted by $\Delta(e_1,e_2,\dots,e_{m})$. Plainly, it only depends on the symbolic
sequence $(e_1,e_2,\dots,e_{m})$. 

The value of $\text{dim}\mathcal{N}_0(V_0(x);\, g_1(x),\dots,g_{m}(x))$ for typical $x\in J$ (either in measure-theoretical or 
in topological sense) will be denoted by $\Delta_J(e_1,e_2,\dots,e_{m})$. By selecting the open balls $B_0$ and $U_0$ 
($B_0\subset M_n$, $U_0\subset \mathbf{M}$, $U_0=\Pi^{-1}(B_0)$) small enough we may (and shall) assume that for every integer $m$,
$1\le m\le n$, 
\begin{equation}\label{typ-dim1}
\text{dim}\mathcal{N}_0\left(V_0(y);\, g_1(y),\dots,g_{m}(y)\right)
=\Delta(e_1,e_2,\dots,e_{m}) \quad \forall y\in B_0\setminus\tilde{J},
\end{equation}

\begin{equation}\label{typ-dim2}
\text{dim}\mathcal{N}_0\left(V_0(y);\, g_1(y),\dots,g_{m}(y)\right)
=\Delta_J(e_1,e_2,\dots,e_{m}) \quad \forall y \in \tilde{J},
\end{equation}
where $\tilde{J}\subset B_0$ is an analytic submanifold of $B_0$ with $J=\Pi^{-1}(\tilde{J})$.

\begin{prp}\label{key-lemma-corollary}
(A corollary of the proof of Key Lemma 3.19 of \cite{Sim(2002)}.) Let $1\le m\le n$, and $\mathcal{N}^* \subset\mathcal{Z}$
be a given subspace with $\mathcal{N}^*\cap\left\{V_m(x)\big|\; x\in U_0\right\}\ne\emptyset$.
We claim that the typical (i. e. minimal) value of
\[
\text{dim}\left[\mathcal{N}^*\cap\mathcal{N}_m(V_m;\, g_{m+1},g_{m+2},\dots,g_n)\right]
\]
for $V_m\in\mathcal{N}^*$ and $g_k\in\mathcal{P}_k$ ($m+1\le k\le n$) is equal to the typical (i. e. minimal) value of
\[
\text{dim}\left[\mathcal{N}^*\cap\mathcal{N}_m(V_m(x);\, g_{m+1}(x),g_{m+2}(x),\dots,g_n(x))\right]
\]
for $x\in U_0$ with $V_m(x)\in\mathcal{N}^*$.
\end{prp}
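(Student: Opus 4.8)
\emph{Proof plan.} As the parenthetical indicates, the statement is obtained by re‑running the proof of Key Lemma 3.19 of \cite{Sim(2002)} (our Proposition~\ref{typ-dim}) with three essentially cosmetic modifications: the virtual velocity process is started at the $m$-th collision rather than at time $0$, the neutral space is everywhere replaced by its intersection with the fixed subspace $\mathcal{N}^*$, and the initial velocity $V_m$ is constrained to $\mathcal{N}^*$. Set $P:=\mathcal{N}^*\times\prod_{k=m+1}^{n}\mathcal{P}_k$ and, on the set $A:=\{x\in U_0:\ V_m(x)\in\mathcal{N}^*\}$ (nonempty by the standing hypothesis, which also makes the abstract minimization non‑vacuous), consider the map $\beta(x):=(V_m(x);\, g_{m+1}(x),\dots,g_n(x))\in P$. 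Since $\dim[\mathcal{N}^*\cap\mathcal{N}_m(V_m;\, g_{m+1},\dots,g_n)]$ depends on $x\in A$ only through $\beta(x)$, the right‑hand side of the claim is the minimum of this quantity over $\beta(A)\subseteq P$, and hence is automatically $\ge$ the minimum over all of $P$, which is the left‑hand side. So one inequality is free, and the work lies in the reverse one.

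For the reverse inequality I would first observe that, by the Connecting Path Formula (\ref{CPF}) together with Definition~\ref{projection-definition}, the subspace $\mathcal{N}_m(V_m;\, g_{m+1},\dots,g_n)$, and therefore its intersection with the fixed $\mathcal{N}^*$, is an algebraic family of subspaces of $\mathcal{Z}$ parametrised by $P$; its dimension is upper semicontinuous, so the minimal value — the left‑hand side — is attained on a dense open subset $W\subseteq P$. It therefore suffices to show that the minimum of $\dim[\mathcal{N}^*\cap\mathcal{N}_m]$ over $\beta(A)$ is no larger than over all of $P$, i.e.\ that $\beta(A)$ reaches the locus $W$. This is exactly the ``realisability'' half of the proof of Key Lemma 3.19 in \cite{Sim(2002)} (see also \cite{S-Sz(2000)}), applied to the sub‑trajectory with symbolic collision sequence $(e_{m+1},\dots,e_n)$: after the customary shrinking of $U_0$ and $B_0$, perturbing the phase point $x$ inside $U_0$ spreads the reflections $g_{m+1}(x),\dots,g_n(x)$ and the velocity $V_m(x)$ richly enough that the abstract minimum is already achieved within $\beta(A)$. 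Granting this, choose $p\in\beta(A)\cap W$ and $x\in A$ with $\beta(x)=p$; then $\dim[\mathcal{N}^*\cap\mathcal{N}_m(V_m(x);\, g_{m+1}(x),\dots,g_n(x))]$ equals the left‑hand side, so the right‑hand side is $\le$ the left‑hand side, hence equal.

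The one place that genuinely needs attention — and the step I expect to be the main obstacle — is the adaptation of the realisability argument of \cite{Sim(2002)} to the \emph{constrained} set $A$ rather than all of $U_0$. The key point is that the condition $V_m(x)\in\mathcal{N}^*$ is a linear restriction on the \emph{velocity} coordinate only: since the time‑$t_m$ flow‑and‑collision map is a local diffeomorphism, $V_m(x)$ sweeps out a full neighbourhood of $V_m(x_0)$ as $x$ varies, so $A$ is cut out of $U_0$ transversally, of the same codimension in $U_0$ as $\mathcal{N}^*$ has in $\mathcal{Z}$. The flexibility that drives the realisability argument in \cite{Sim(2002)} — the ability to move the tangent hyperplanes of the cylinders $C_{e_{m+1}},\dots,C_{e_n}$, hence the reflections $g_k$, through open sets by \emph{configuration} perturbations — is untouched by this velocity constraint and therefore survives on $A$. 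Once this is checked, the argument of Key Lemma 3.19 carries over verbatim and the proposition follows.
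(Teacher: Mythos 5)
The paper gives no independent argument here: its proof consists solely of the remark that the statement follows from the proof of Key Lemma 3.19 of \cite{Sim(2002)}, which is exactly the route you take (your free inequality and semicontinuity step are bookkeeping, and the substantive realisability step is deferred to that cited proof, just as the paper does). Your added observation that the constraint $V_m(x)\in\mathcal{N}^*$ restricts only the velocity coordinate and therefore does not interfere with the configuration-driven flexibility used in the realisability argument is the one genuinely new point this proposition requires beyond \cite{Sim(2002)}, and it is consistent with the author's intent, so your proposal is essentially the same approach as the paper's.
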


\begin{proof}
The proof of this statement can be obtained from the proof of Key Lemma 3.19 of \cite{Sim(2002)}, hence it is omitted.
\end{proof}

Now it is time to bring up the definition of the ``critical index'' $n_0$.

\begin{df}\label{n-zero}
The ``critical index'' $n_0$ is the unique positive integer $n_0$, $1\le n_0\le n$, with the property that for any $x\in U_0$
\begin{enumerate}
\item[(i)] the directions of the relative velocities $v_{i(k)}^-(x)-v_{j(k)}^-(x)$, $v_{i(k)}^+(x)-v_{j(k)}^+(x)$, $k=1,2,\dots,n_0$,
determine in $M_n$ if $\Pi(x)\in\tilde{J}$, whereas
\item[(ii)] the directions of the relative velocities $v_{i(k)}^-(x)-v_{j(k)}^-(x)$, $v_{i(k)}^+(x)-v_{j(k)}^+(x)$, $k=1,2,\dots,n_0-1$,
do not determine yet in $M_n$ if $\Pi(x)\in\tilde{J}$.
\end{enumerate}
The precise meaning of the notions above is the following: The manifolds $\mathcal{W}_{n_0}=\mathcal{W}_{n_0}(x)\subset U_0$ that are defined by
fixing the directions of all the relative velocities listed in (i) (which form a smooth foliation of the local neighborhood $U_0$ if $U_0$ is chosen
small enough) are either subsets of $J$ or they are disjoint from it, whereas the manifolds $\mathcal{W}_{n_0-1}=\mathcal{W}_{n_0-1}(x)$ that are defined by
fixing the directions of all the relative velocities listed in (ii) (which also form a smooth foliation of the local neighborhood $U_0$ for small enough $U_0$)
are transversal to $J$.
\end{df}

Apply Proposition \ref{key-lemma-corollary} to $m=n_0$, 
\[
\mathcal{N}^*=\mathcal{N}_m\left(V_0(x);\, g_1(x),g_2(x),\dots,g_{n_0}(x)\right)
\]
($x\in U_0$) to realize that the directions of the relative velocities listed above in (i) also determine if the phase point 
$(V_0;\, g_1,g_2,\dots,g_n)\in M_n$ belongs to $\tilde{J}$ or not. In the free velocity process
$(V_0;\, g_1,g_2,\dots,g_n)\in M_n$ there is absolutely no constraint on the velocities, other than that each $g_k$ is an
orthogonal reflection across a hyperplane determined by $e_k=(i(k),\, j(k))$. Because of this, the only way that the relative
velocities listed above in (i) determine the status of $(V_0;\, g_1,\dots,g_n)\in\tilde{J}$ is that
a minor $\mathcal{M}$ (determinant of a square submatrix) of the system (\ref{CPF}) with maximum column index $n_0$ vanishes.
Observe that the $n_0$-th column of the system of CPFs (\ref{CPF}),
i.e. the coefficients of the unknown $\alpha_{n_0}$ in (\ref{CPF}), depend on the pair of velocities
\[
r(x)=\left(v^-_{i(n_0)}(x)-v^-_{j(n_0)}(x),\, v^+_{i(n_0)}(x)-v^+_{j(n_0)}(x)\right)
\]
linearly (they are certain linear combinations of some coordinates of the two components of
$r(x)$), hence the minor $\mathcal{M}$ also depends linearly on $r(x)$, and $(V_0;\, g_1,\dots,g_n)\in\tilde{J}$ 
means that the solution set of (\ref{CPF}) is atypically big. Using these two observations and the Direction Determination 
Principle (DDP) of Proposition \ref{DDP} we obtain a useful description of the membership relation $x\in J$ as follows.

\begin{prp}\label{belongs-to-hyperplane}
For any $x\in U_0$ the relation $x\in J$ holds true if and only if the pair of relative
velocities
\begin{equation} \label{def-r}
r(x):=\left(v^-_{i(n_0)}(x)-v^-_{j(n_0)}(x),\, v^+_{i(n_0)}(x)-v^+_{j(n_0)}(x)
\right) \in \mathbb{R}^\nu\times\mathbb{R}^\nu=\mathbb{R}^{2\nu}
\end{equation}
belongs to a hyperplane $H(x)\subset\mathbb{R}^{2\nu}$ depending analytically on the directions
\[
\text{dir}(v^-_{i(k)}(x)-v^-_{j(k)}(x)), \quad
\text{dir}(v^+_{i(k)}(x)-v^+_{j(k)}(x))
\]
of the indicated relative velocities for $k=1,2,\dots,n_0-1$.
\end{prp}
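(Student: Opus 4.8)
The plan is to realise $J$ as the locus where the Connecting Path Formula (\ref{CPF}) loses rank, and then to pin that rank loss down to the vanishing of one minor built from the first $n_0$ columns. I would start from the fact already used in the text: since the collision graph $\mathcal{G}$ is connected, $\Phi$ embeds $\mathcal{N}_0(S^{[0,T]}x)$ onto the solution space $\overline{\mathcal{N}}_0(S^{[0,T]}x)$ of the homogeneous linear system (\ref{CPF}), so $x\in J$ (i.e. $S^{[0,T]}x$ is not sufficient) holds precisely when $\dim\overline{\mathcal{N}}_0(S^{[0,T]}x)\ge 2$, equivalently when the coefficient matrix $A=A(x)$ of (\ref{CPF}) — with a column of coefficients for each unknown $\alpha_1,\dots,\alpha_n$ — has rank at most $n-2$. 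The flow direction is always a solution, so the generic rank is $n-1$ and $J$ is the locus of the extra rank drop; since the equation of (\ref{CPF}) attached to a non-essential edge $e_m$ involves only $\alpha_1,\dots,\alpha_m$, one may hope to detect this rank drop already in the submatrix $A_{\le n_0}$ formed by the columns of indices $1,\dots,n_0$, and making that precise is the heart of the matter.

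Next I would show that the rank drop is indeed visible at stage $n_0$, and for the free velocity process as well. Condition (i) of Definition~\ref{n-zero} says that for $x\in U_0$ the membership of $\Pi(x)$ in $\tilde J$ is a function of the directions of $v^{\pm}_{i(k)}(x)-v^{\pm}_{j(k)}(x)$, $k=1,\dots,n_0$; by Proposition~\ref{DDP} these directions — together with $v^{-}_{i(1)}-v^{-}_{j(1)}$ and $v^{+}_{i(n_0)}-v^{+}_{j(n_0)}$, whose directions are among them — determine the neutral space $\mathcal{N}^{*}=\mathcal{N}_{n_0}(V_0;g_1,\dots,g_{n_0})$, while $\mathcal{N}_{n_0}(V_0;g_1,\dots,g_n)=\mathcal{N}^{*}\cap\mathcal{N}_{n_0}(V_{n_0};g_{n_0+1},\dots,g_n)$. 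Applying Proposition~\ref{key-lemma-corollary} with $m=n_0$ and this $\mathcal{N}^{*}$ shows that the generic (minimal) dimension of that intersection — hence whether a \emph{free} point $(V_0;g_1,\dots,g_n)\in M_n$ lies in $\tilde J$ — is the same as for points of $\Pi(U_0)$ and is governed by the directions in (i) alone.

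Now the localisation. Since in $M_n$ the velocities carry no constraint beyond $g_k\in\mathcal{P}_k$, if $\tilde J$-membership depended on a column of $A$ with index $>n_0$ then varying that column, with the first-$n_0$ directions kept fixed, would alter the rank and contradict the previous step; so the rank drop takes place inside $A_{\le n_0}$, and — $\tilde J$ being of codimension one — the rank of $A_{\le n_0}$ drops there by exactly one, i.e. $\tilde J$ is the zero set of a single maximal minor $\mathcal{M}$ of $A_{\le n_0}$. By condition (ii) of Definition~\ref{n-zero} the directions up to index $n_0-1$ do not yet decide $\tilde J$, so $\mathcal{M}$ must genuinely involve column $n_0$, i.e. its maximum column index is exactly $n_0$. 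Finally, the $n_0$-th column of $A$ collects the coefficients of $\alpha_{n_0}$ in (\ref{CPF}) — the vector $v^{-}_{i(n_0)}-v^{-}_{j(n_0)}$ (in the $n_0$-th equation, when $e_{n_0}$ is non-essential) and the quantities $-\Gamma_{n_0}^{(m)}$ for $m>n_0$, each a linear combination of $v^{-}_{i(n_0)}-v^{-}_{j(n_0)}$ and $v^{+}_{i(n_0)}-v^{+}_{j(n_0)}$ — so this column, and hence the minor $\mathcal{M}$, is a homogeneous linear function of the pair $r(x)\in\mathbb{R}^{2\nu}$. Rescaling the columns of $A$ with indices $1,\dots,n_0-1$ (which merely rescales the unknowns $\alpha_k$ and so does not affect the rank condition) turns the coefficients of this linear function into analytic functions of the directions $\text{dir}(v^{\pm}_{i(k)}(x)-v^{\pm}_{j(k)}(x))$, $k\le n_0-1$, by Proposition~\ref{DDP}. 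Therefore $x\in J\iff\mathcal{M}=0\iff r(x)\in H(x)$, where $H(x)=\{w\in\mathbb{R}^{2\nu}:\mathcal{M}(w)=0\}$ is the asserted hyperplane, and it is a proper hyperplane — not all of $\mathbb{R}^{2\nu}$ — by condition (ii).

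The step I expect to cost the most is the localisation: passing from ``$\tilde J$-membership is a function of the first-$n_0$ directions'' to ``$\tilde J$ is the zero set of one minor $\mathcal{M}$ of $A_{\le n_0}$ with maximum column index $n_0$''. One has to rule out a rank drop occurring in a more indirect way — through an algebraic dependence among several minors, or through the rows of $A$ rather than its columns — and also to confirm that the rank actually falls by exactly one on the codimension-one set $\tilde J$; this is where the unconstrained velocities in $M_n$, the triangular structure of (\ref{CPF}) (equation $m$ uses only $\alpha_1,\dots,\alpha_m$), and the codimension count have to be combined carefully. The remaining bookkeeping with Propositions~\ref{DDP} and~\ref{key-lemma-corollary} and with the column rescalings should be routine.
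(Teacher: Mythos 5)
Your proposal is correct and follows essentially the same route as the paper, whose own justification is the paragraph preceding the proposition: Proposition~\ref{key-lemma-corollary} at $m=n_0$ transfers the question to the free process in $M_n$, the unconstrained velocities force $\tilde J$-membership to be the vanishing of a minor of (\ref{CPF}) with maximum column index $n_0$, linearity of the $n_0$-th column in $r(x)$ gives the hyperplane, and DDP gives the analytic dependence on the earlier directions. Your extra bookkeeping (rank formulation, column rescaling) only makes explicit what the paper leaves informal.
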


\medskip

In order to make the mechanism discussed in Proposition \ref{belongs-to-hyperplane} more transparent, below we provide
the reader with a brief analysis of the special example $\Sigma=(e_1,\,e_2,\,e_3)$ with $e_1=(1,2)$, $e_2=(1,3)$, and
$e_3=(2,3)$. Since the relevant observation times for this sequence are $t_1$ and $t_2$ separating the first two and the
second and third collisions, respectively, in this example we will consequently denote the velocities and space perturbations
observed at time $t_1$ with a superscript $-$, whereas the velocities and space perturbations
observed at time $t_2$ will be distinguished by a superscript $+$. (This is somewhat in contrast with the earlier notations, but here they
come rather handy.) 

The neutrality equations with respect to $e_1$ and $e_2$, along with the preservation of the center of mass are
\[
\alpha_1(v_1^--v_2^-)=\delta q_1^--\delta q_2^-,
\]
\[
\alpha_2(v_1^--v_3^-)=\delta q_1^--\delta q_3^-,
\]
\[
\delta q_1^-+\delta q_2^-+\delta q_3^-=0.
\]
From these equations we immediately get
\[
\delta q_1^-=\frac{1}{3}\alpha_1(v_1^--v_2^-)+\frac{1}{3}\alpha_2(v_1^--v_3^-),
\]
\[
\delta q_2^-=-\frac{2}{3}\alpha_1(v_1^--v_2^-)+\frac{1}{3}\alpha_2(v_1^--v_3^-),
\]
\[
\delta q_3^-=\frac{1}{3}\alpha_1(v_1^--v_2^-)-\frac{2}{3}\alpha_2(v_1^--v_3^-).
\]
By using the transformation equations through $e_2$ and the neutrality with respect to this collision
\[
\delta q_2^+=\delta q_2^-,
\]
\[
\delta q_1^-+\delta q_3^-=\delta q_1^++\delta q_3^+,
\]
\[
(\delta q_1^+-\delta q_3^+)-(\delta q_1^--\delta q_3^-)=\alpha_2\left[(v_1^+-v_3^+)-(v_1^--v_3^-)\right]
\]
one easily expresses the quantity $\delta q_2^+-\delta q_3^+$ as follows:
\[
\delta q_2^+-\delta q_3^+=-\alpha_1(v_1^--v_2^-)+\frac{1}{2}\alpha_2
\left[(v_1^--v_3^-)+(v_1^+-v_3^+)\right].
\]
Note that the linear coordinates $\alpha_1$ and $\alpha_2$ independently parametrize the
two-dimensional neutral space $\mathcal{N}_0(x;\,e_1,e_2)$.
From the last equation we see that the non-hyperbolicity $x\in J$ holds true precisely when
the vectors $v_1^--v_2^-$ and $(v_1^--v_3^-)+(v_1^+-v_3^+)$ are parellel. This parallelity condition 
defines a subspace $H$ for the vector $r(x)=(v_1^--v_3^-,\, v_1^+-v_3^+)$ with codimension $\nu-1$, which
codimension is $1$ exactly when $\nu=2$. (In the case $\nu\ge 3$ there is nothing to prove; the codimension
is already big enough.)

\medskip

The next result tells us that the collision $e_{n_0}$ decreases the dimension of the neutral space.

\begin{lm} \label{nzero-essential}
\[
\Delta(e_1,e_2,\dots,e_{n_0})<\Delta(e_1,e_2,\dots,e_{n_0-1}).
\]
\end{lm}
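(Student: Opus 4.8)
\medskip

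\noindent\emph{Plan of proof.} I would argue by contradiction, assuming $\Delta(e_1,\dots,e_{n_0})=\Delta(e_1,\dots,e_{n_0-1})$. If the edge $e_{n_0}$ is \emph{essential} the claim is immediate: passing an essential collision lowers the number of connected components of the collision graph by one, and by the bookkeeping of the Connecting Path Formula this decreases the typical dimension of the neutral space by exactly $\nu-1\ge 1$, contradicting the hypothesis at once. So I would assume from here on that $e_{n_0}$ is \emph{non-essential}, and the whole point will be to contradict the minimality clause (ii) of Definition \ref{n-zero}.

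The first step is to convert the assumed equality into a redundancy of the $n_0$-th column of (\ref{CPF}). Since appending a collision can only shrink the neutral space, one has the pointwise inclusion $\mathcal{N}_0(V_0;g_1,\dots,g_{n_0})\subseteq\mathcal{N}_0(V_0;g_1,\dots,g_{n_0-1})$ on $M_n$; together with Proposition \ref{typ-dim} and the assumed equality of the two typical (minimal) dimensions this forces the two neutral spaces to coincide on the dense open full-measure subset of $M_n$ where both attain their minimum, hence --- via $\Pi$ and \eqref{typ-dim1} --- for every $y\in B_0\setminus\tilde J$. By the description of the neutral space in the proof of Proposition \ref{DDP} (and Proposition \ref{variations}) this says exactly that in generic position the parallelism condition contributed by $e_{n_0}$, namely that $R_{n_0}(\delta q\cdot g_2\cdots g_{n_0-1})$ be parallel to $v^-_{i(n_0)}-v^-_{j(n_0)}$, is already satisfied by every element of $\mathcal{N}_{n_0-1}(V_0;g_1,\dots,g_{n_0-1})$. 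In the matrix form of (\ref{CPF}) this is precisely the statement that the $n_0$-th column (the coefficients of the unknown $\alpha_{n_0}$) is, generically, a linear combination --- with coefficients rational in the relative velocities at $e_1,\dots,e_{n_0}$ --- of the columns of index $<n_0$, so that appending the entire block of equations attached to $e_{n_0}$ raises the rank of (\ref{CPF}) by only one.

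The crux --- and the step I expect to be the main obstacle --- is to upgrade this generic redundancy to the conclusion that $\tilde J$ is already cut out by a minor $\mathcal{M}$ of (\ref{CPF}) of maximal column index $\le n_0-1$. Indeed, once that is known, the directions of the relative velocities at $e_1,\dots,e_{n_0-1}$ alone determine whether $\Pi(x)\in\tilde J$ --- using, as in the discussion preceding Proposition \ref{belongs-to-hyperplane}, the conservation of the relative speed at each elastic collision to pass from the columns of (\ref{CPF}) to the mere directions of the relative velocities --- which contradicts (ii) of Definition \ref{n-zero}. The delicacy is that the redundancy above was obtained only \emph{off} $\tilde J$, while along $\tilde J$ the rank of (\ref{CPF}) drops; one must therefore propagate the linear dependence of the $n_0$-th column to a dense analytic subset of $\tilde J$ and check that the zero set of the minor $\mathcal{M}$ of maximal column index $n_0$ (the one originally detecting $\tilde J$) agrees with the zero set of a minor not involving column $n_0$. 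I would do this by exploiting the analyticity of $\tilde J$ and of the coefficients of (\ref{CPF}) together with Proposition \ref{key-lemma-corollary} applied at $m=n_0-1$ (which matches the typical intersection dimensions computed on the free velocity process $M_n$ with those computed over genuine phase points $x\in U_0$), isolating the rank-propagation step as a short sub-lemma. When $\nu\ge 3$ there is a shortcut avoiding this: the redundancy would force $\text{dir}(v^-_{i(n_0)}-v^-_{j(n_0)})$ to be constant along each leaf $\mathcal{W}_{n_0-1}$, so $J\cap\mathcal{W}_{n_0-1}$ would be a union of leaves $\mathcal{W}_{n_0}$, each of codimension $\nu-1\ge 2$ in $\mathcal{W}_{n_0-1}$, which is incompatible with the transversality of $\mathcal{W}_{n_0-1}$ to the codimension-one manifold $J$ demanded by Definition \ref{n-zero}. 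Once the obstruction is cleared the lemma follows.
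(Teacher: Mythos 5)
Your overall strategy is the same as the paper's: assume $\Delta(e_1,\dots,e_{n_0})=\Delta(e_1,\dots,e_{n_0-1})$, turn this into redundancy of the $e_{n_0}$-data in the system (\ref{CPF}), and contradict clause (ii) of Definition \ref{n-zero}. The problem is that the step you yourself single out as ``the main obstacle'' --- upgrading the redundancy, which you only have off $\tilde J$, to the statement that $\tilde J$ is cut out by a minor of maximal column index $\le n_0-1$ --- is precisely the step you do not carry out: it is left as a plan (``isolating the rank-propagation step as a short sub-lemma''), so for $\nu=2$ (and in general) the proposal is incomplete. Moreover, that detour is not needed, and the paper avoids it. Once the $m=n_0$ equation of (\ref{CPF}) is redundant, one normalizes $\alpha_{n_0}=0$ by subtracting a suitable multiple of the all-advances-equal (flow-direction) solution; this removes the $n_0$-th column from the entire system, so the solution set --- and hence the atypical size of its dimension, which is what membership in $J$ means --- involves the pair $r(x)$ of (\ref{def-r}) not at all, and the contradiction with (i)--(ii) of Definition \ref{n-zero} is immediate. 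Equivalently, one can finish by continuity along the leaves: off $J$, which is dense in each leaf $\mathcal{W}_{n_0-1}$ by transversality, the redundancy together with Propositions \ref{DDP} and \ref{variations} confines the variations of both $q_{i(n_0)}-q_{j(n_0)}$ and $v_{i(n_0)}-v_{j(n_0)}$ to the fixed line of $v^-_{i(n_0)}-v^-_{j(n_0)}$, so both directions at $e_{n_0}$ are constant on the connected leaf; by (i) each leaf is then contained in $J$ or disjoint from it, contradicting (ii). No propagation of a rank identity onto $\tilde J$ is required in either version.

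Your $\nu\ge 3$ shortcut does not close the gap either, because its final inference is a non sequitur: the fact that $J\cap\mathcal{W}_{n_0-1}$ would be a union of leaves of codimension $\nu-1\ge 2$ inside $\mathcal{W}_{n_0-1}$ is perfectly compatible with $J\cap\mathcal{W}_{n_0-1}$ being a codimension-one hypersurface of the leaf (a codimension-one manifold can be smoothly foliated by codimension-two submanifolds), so no contradiction with transversality follows from the codimension count alone. What the redundancy actually gives is stronger: since the relative position at $e_{n_0}$ can vary only along the (fixed) direction of the incoming relative velocity, the impact parameter, hence the collision normal, hence also the outgoing direction $v^+_{i(n_0)}-v^+_{j(n_0)}$, is constant along each leaf $\mathcal{W}_{n_0-1}$; thus the leaves $\mathcal{W}_{n_0}$ locally coincide with $\mathcal{W}_{n_0-1}$ and membership in $J$ is constant on each leaf --- that is the contradiction, and it works for every $\nu\ge 2$, not only $\nu\ge 3$. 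The peripheral parts of your argument (the essential-edge case, and the derivation of generic redundancy from Proposition \ref{typ-dim} and (\ref{typ-dim1})) are fine.
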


\begin{proof}
Proof by contradiction: assume that $\Delta(e_1,\dots,e_{n_0})=\Delta(e_1,\dots,e_{n_0-1})$. This assumption means that the actual CPF of (\ref{CPF})
(in which $m=n_0$) can be dropped from the whole system without affecting the solution set. Furthermore, by making the standard reduction
$\alpha_{n_0}=0$ for the advance $\alpha_{n_0}$ (which can be done by modifying the solution by adding to it a solution with all advances 
equal, and this chops off the dimension of the solution set by $1$) we can completely drop the $n_0$-th column from the system of
CPFs (\ref{CPF}). This shows that the two relative velocity components of $r(x)$ in (\ref{def-r}) have no effect on the solution set in question,
and this contradicts to the properties (i)--(ii) of the critical index $n_0$ listed in Definition \ref{n-zero}.
\end{proof}

The upcoming lemma tells us that the critical collision $e_{n_0}$ does not distinguish between the points of $J$ and
of $U_0\setminus J$.

\begin{lm} \label{no-distinguish}
\[
\Delta(e_1,e_2,\dots,e_{n_0})=\Delta_J(e_1,e_2,\dots,e_{n_0}).
\]
\end{lm}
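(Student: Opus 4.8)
The plan is to establish the identity by producing a single point $x\in J$ at which the neutral space $\mathcal{N}_0(V_0(x);g_1(x),\dots,g_{n_0}(x))$ attains the generic dimension $\Delta(e_1,\dots,e_{n_0})$. By Proposition \ref{typ-dim} the number $\Delta(e_1,\dots,e_{n_0})$ is the minimum of $\dim\mathcal{N}_0(V_0(x);g_1(x),\dots,g_{n_0}(x))$ over all $x\in U_0$, and since $J\subset U_0$ this gives $\Delta_J(e_1,\dots,e_{n_0})\ge\Delta(e_1,\dots,e_{n_0})$ for free; because $\dim\mathcal{N}_0$ is upper semicontinuous, its typical value $\Delta_J(e_1,\dots,e_{n_0})$ along $J$ is the minimum along $J$, so one such point yields the reverse inequality.

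The construction rests on an \emph{asymmetry} between the two quantities in how they depend on the critical reflection $g_{n_0}$. Note first that $e_{n_0}$ is non-essential (Lemma \ref{nzero-essential}), so (\ref{CPF}) does contain an equation for $e_{n_0}$. By the Connecting Path Formula the image $\overline{\mathcal{N}}_0(V_0;g_1,\dots,g_{n_0})=\Phi\bigl(\mathcal{N}_0(V_0;g_1,\dots,g_{n_0})\bigr)$ is the solution set of the subsystem of (\ref{CPF}) attached to the non-essential edges $e_m$ with $m\le n_0$, and every entry of that subsystem is a linear combination of the relative velocities $v^{\pm}_{i(k)}-v^{\pm}_{j(k)}$ with $k<n_0$ and of the single \emph{pre}-collision velocity $v^{-}_{i(n_0)}-v^{-}_{j(n_0)}$ — the CPF of $e_m$ never contains the post-collision relative velocity at $e_m$. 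All of these are functions of the ``context'' $\gamma:=(V_0,g_1,\dots,g_{n_0-1})$, so $\dim\mathcal{N}_0(V_0;g_1,\dots,g_{n_0})$ (equivalently $\dim\overline{\mathcal{N}}_0$) is a function of $\gamma$ alone and does not depend on $g_{n_0}$; in particular the exceptional locus $\mathcal{E}$, where this dimension exceeds $\Delta(e_1,\dots,e_{n_0})$, is the preimage of a proper analytic subset of the (connected) space of contexts. On the other hand, membership in $J=\tilde J$ — the vanishing of the minor $\mathcal{M}$ of maximal column index $n_0$ — genuinely turns with $g_{n_0}$: by the special shape of the $n_0$-th column in the rows of $e_1,\dots,e_{n_0}$, the minor $\mathcal{M}$ must use a row coming from a later collision $e_m$ with $m>n_0$ (otherwise $\{\mathcal{M}=0\}$ would reduce near $x_0$ to a locus not involving the post-collision data $v^{+}_{i(n_0)}-v^{+}_{j(n_0)}$, incompatible with the criticality of $n_0$ in Definition \ref{n-zero} and with Lemma \ref{nzero-essential}), and in that row the coefficient of $\alpha_{n_0}$ is the combination $\Gamma^{(m)}_{n_0}$, which contains $v^{+}_{i(n_0)}-v^{+}_{j(n_0)}$ linearly. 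Hence along the fibre over a fixed $\gamma$, $\mathcal{M}$ is a nonconstant (indeed affine in $r(x)$) real-analytic function of $g_{n_0}\in\mathcal{P}_{n_0}$. I would record these two facts as a sub-lemma.

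Granting them, the endgame is short. Since $\mathcal{M}$ involves only the velocities of the first $n_0$ collisions, $\tilde J$ is a cylinder $\tilde J_{n_0}\times\prod_{k>n_0}\mathcal{P}_k$ over a smooth codimension-one hypersurface $\tilde J_{n_0}=\{\mathcal{M}=0\}\subset M_{n_0}$, and $\Delta_J(e_1,\dots,e_{n_0})$ is the generic value of $\dim\mathcal{N}_0(V_0;g_1,\dots,g_{n_0})$ along $\tilde J_{n_0}$. Let $p$ be the projection of $\tilde J_{n_0}$ to the space of contexts, forgetting $g_{n_0}$. By the sub-lemma $\tilde J_{n_0}$ is not a union of $\mathcal{P}_{n_0}$-fibres, and using that $g_{n_0}\mapsto v^{+}_{i(n_0)}-v^{+}_{j(n_0)}$ sweeps out the whole sphere of radius $\|v^{-}_{i(n_0)}-v^{-}_{j(n_0)}\|$ one checks that the affine hyperplane $\{\mathcal{M}=0\}$ cuts that sphere transversally at generic points, so $\partial_{g_{n_0}}\mathcal{M}\ne0$ there and $p$ is a submersion at generic points of $\tilde J_{n_0}$; thus the image of $p$ contains a nonempty open set $W$ of contexts. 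As $\mathcal{E}$ descends to a proper analytic subset of the connected context space, it cannot contain $W$, so there is a context $\gamma_{*}\in W$ off $\mathcal{E}$, for which $\dim\mathcal{N}_0(V_0;g_1,\dots,g_{n_0})=\Delta(e_1,\dots,e_{n_0})$ for \emph{every} $g_{n_0}$ above $\gamma_{*}$. Picking $g_{n_0}$ above $\gamma_{*}$ with $\mathcal{M}=0$, completing the tuple by arbitrary $g_{n_0+1},\dots,g_n$, and pulling back through $\Pi$ in (\ref{projection}), we obtain the desired $x\in J$ with $\dim\mathcal{N}_0(V_0(x);g_1(x),\dots,g_{n_0}(x))=\Delta(e_1,\dots,e_{n_0})$.

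The hard part is the sub-lemma, and within it the claim that $\mathcal{M}$ really depends on the \emph{post}-collision relative velocity at $e_{n_0}$ — i.e.\ that the degeneracy hypersurface $\tilde J$ genuinely rotates with $g_{n_0}$ while the exceptional locus $\mathcal{E}$ does not. A priori $\mathcal{M}$ could depend on the pair $r(x)$ only through its pre-collision component, in which case $\tilde J$ would be fibred over the contexts in exactly the same way as $\mathcal{E}$ and the argument would collapse. Excluding this requires combining the explicit algebraic form of the $n_0$-th column of (\ref{CPF}) with the defining property of the critical index $n_0$; once that dependence is secured, the transversality and openness step together with the genericity bookkeeping is routine.
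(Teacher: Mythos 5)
Your overall strategy---exhibit a single point of $\tilde J$ at which $\dim\mathcal{N}_0(V_0;g_1,\dots,g_{n_0})$ equals $\Delta(e_1,\dots,e_{n_0})$ and conclude via (\ref{typ-dim2})---is legitimate, and your observation that this dimension depends only on the context $(V_0,g_1,\dots,g_{n_0-1})$ and not on $g_{n_0}$ is correct (it is exactly what Proposition \ref{DDP} encodes, since the CPF row of a non-essential $e_m$ never contains $v^+_{i(m)}-v^+_{j(m)}$). But the argument stands or falls with your sub-lemma that the minor $\mathcal{M}$ defining $\tilde J$ genuinely depends on the post-collision velocity $v^+_{i(n_0)}-v^+_{j(n_0)}$, i.e.\ that $\tilde J$ is not a union of $\mathcal{P}_{n_0}$-fibres, and this you do not prove. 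The justification you sketch (criticality of $n_0$ plus Lemma \ref{nzero-essential}) does not deliver it: property (ii) of Definition \ref{n-zero} only says that the \emph{directions} of the relative velocities of $e_1,\dots,e_{n_0-1}$ fail to determine membership in $\tilde J$, and the pre-collision velocity $v^-_{i(n_0)}-v^-_{j(n_0)}$ is already extra information beyond those directions while being independent of $g_{n_0}$. So it is entirely compatible with Definition \ref{n-zero} and Lemma \ref{nzero-essential} that $\mathcal{M}$ involves $r(x)$ only through its pre-collision component (for instance, a minor all of whose rows come from collisions $e_m$ with $m\le n_0$ has exactly this form), which is precisely the collapse scenario you concede in your last paragraph. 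Two smaller flaws: the inference ``$e_{n_0}$ is non-essential by Lemma \ref{nzero-essential}'' is unjustified, since adding an essential edge also drops the neutral dimension; and the closing step ``pulling back through $\Pi$'' is not automatic because $\Pi$ in (\ref{projection}) is far from surjective onto $B_0$---you should instead invoke (\ref{typ-dim2}) at the free point of $\tilde J$ directly.

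The paper's proof avoids the missing claim altogether, which is why it closes. Under the indirect assumption $\Delta<\Delta_J$, what would separate $J$ from $U_0\setminus J$ is the value of $\dim\mathcal{N}_{n_0}(V_0;g_1,\dots,g_{n_0})$, and this equals $\dim\mathcal{N}_{n_0-1}(V_0;g_1,\dots,g_{n_0-1})-\dim R_{n_0}\bigl[\mathcal{N}_{n_0-1}(V_0;g_1,\dots,g_{n_0-1})\bigr]+1$, because the direction of $v^-_{i(n_0)}-v^-_{j(n_0)}$ automatically lies in the image $R_{n_0}[\mathcal{N}_{n_0-1}]$ (the flow direction is always neutral). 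Both quantities on the right are determined, via the DDP, by the directions of the relative velocities of $e_1,\dots,e_{n_0-1}$ alone---not merely by the context, and in particular without the direction of $v^-_{i(n_0)}-v^-_{j(n_0)}$---so those directions would decide membership in $J$, contradicting (ii) of Definition \ref{n-zero} immediately. In short, the paper only needs insensitivity of the relevant \emph{dimension} to the data of $e_{n_0}$, whereas your route requires the stronger, unestablished sensitivity of $\tilde J$ itself to $g_{n_0}$; as it stands, the crux of your proposal is a genuine gap.
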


\begin{proof}
Again a proof by contradiction: assume that 
$\Delta(e_1,\dots,e_{n_0})<\Delta_J(e_1,\dots,e_{n_0})$. According to Proposition \ref{DDP}, the neutral space
\[
\mathcal{N}_{n_0-1}\left(V_0(x);\, g_1(x),\dots,g_{n_0-1}(x)\right)
\]
is determined by the directions of the relative velocities $v^-_{i(l)}(x)-v^-_{j(l)}(x)$ and $v^+_{i(l)}(x)-v^+_{j(l)}(x)$
for $l=1,2,\dots,n_0-1$, whereas, according to (ii) of Definition \ref{n-zero}, these relative velocities do not determine whether
$x\in J$. On the other hand, the projection
\[
R_{n_0}\left[\mathcal{N}_{n_0-1}\left(V_0(x);\, g_1(x),\dots,g_{n_0-1}(x)\right)\right]
\]
of this neutral space onto $\delta q_{i(n_0)}-\delta q_{j(n_0)}$ determines if $x\in J$ is true or not. To see this we note
that, due to the assumption $\Delta(e_1,\dots,e_{n_0})<\Delta_J(e_1,\dots,e_{n_0})$, for the points $x\in U_0\setminus J$
the dimension of
\[
R_{n_0}\left[\mathcal{N}_{n_0-1}\left(V_0(x);\, g_1(x),g_2(x),\dots,g_{n_0-1}(x)\right)\right]
\] (which is 
\[
\text{dim}\left[\mathcal{N}_{n_0-1}\left(V_0(x);\, g_1(x),\dots,g_{n_0-1}(x)\right)\right]-
\text{dim}\left[\mathcal{N}_{n_0}\left(V_0(x);\, g_1(x),\dots,g_{n_0}(x)\right)\right]+1)
\]
is larger than the similar dimension for the points $x\in J$.
This, in turn, means that the directions of the relative velocities $v^-_{i(l)}(x)-v^-_{j(l)}(x)$ and $v^+_{i(l)}(x)-v^+_{j(l)}(x)$
($l=1,2,\dots,n_0-1$) determine if $x\in J$ is true or not, thus violating property (ii) of
$n_0$ listed in Definition \ref{n-zero}.
\end{proof}

\section{Finishing the proof of the Theorem} \label{finishing-theorem}

First we present the closing part of the proof by assuming that $\nu=2$. We remind the reader that the entire
proof of the Theorem is a proof by contradiction, so the coincidence (in a neighborhood $U_0$) of $J$ and the
past-singularity $K$ is assumed all along. Right after that we present the proof for the case $\nu\ge 3$,
which is just slightly more difficult technically than the case $\nu=2$. Thus, for now we assume that $\nu=2$.

Consider an arbitrary point $y_0\in J$. Let $\tau<0$ be the unique number such that
\begin{enumerate}
\item[$(1)$] $S^\tau y_0=y^* \in \mathcal{SR}^+_0$,
\item[$(2)$] $S^{(\tau,0)}y_0\cap \partial\mathbf{M}=\emptyset$.
\end{enumerate}
Here $\mathcal{SR}^+_0$ denotes the set of all singular reflections given with their outgoing
(post-singularity) velocity.

Select and fix a vector $w_0$, $w_0\perp v(y^*)$, such that
\begin{equation} \label{w-nought}
w_0\in\mathcal{N}_0\left(V_0(y^*);\, g_1(y^*),\dots,g_{n_0-1}(y^*)\right)\setminus
\mathcal{N}_0\left(V_0(y^*);\, g_1(y^*),\dots,g_{n_0}(y^*)\right).
\end{equation}
This is possible, due to lemmas \ref{nzero-essential}--\ref{no-distinguish}. Next we consider a smooth curve
$\gamma_0(s)$, $|s|<\vep_0$, $\gamma_0(0)=y^*$, $\gamma_0(s)\in\mathcal{SR}^+_0$, as follows:

\emph{Case A. If the singularity at $y^*$ is a double collision (a corner of the configuration space)}

\begin{enumerate}
\item[$(1)$] $v(\gamma_0(s))=\dfrac{v(y^*)+s\cdot w_0}{||v(y^*)+s\cdot w_0||}$,
\item[$(2)$] $q(\gamma_0(s))=q(\gamma_0(0))=q(y^*)$
\end{enumerate}
for $|s|<\vep_0$. 

\emph{Case B. If the singularity at $y^*$ is a tangency}

\begin{enumerate}
\item[$(1)$] $v(\gamma_0(s))=\dfrac{v(y^*)+s\cdot w_0}{||v(y^*)+s\cdot w_0||}$,
\item[$(2)$] $q(\gamma_0(s))=q(y^*)+\alpha\cdot w_0+\beta\cdot v(\gamma_0(s)))$
\end{enumerate}
($|s|<\vep_0$) so that the relation $\gamma_0(s)\in\mathcal{SR}^+_0$ still holds true. We note that the orders of
magnitude of the correction parameters $\alpha$ and $\beta$ are $\alpha=\text{O}(s^2)$, $\beta=\text{O}(s)$,
as a simple geometric observation shows.

Fix a time $t^*$, $t_{n_0-1}(y^*)<t^*<t_{n_0}(y^*)$, and investigate the image 
$S^{t^*}(\gamma_0(s))=\gamma^*(s)$ of the curve $\gamma_0$ under the $t^*$-iterate of the billiard flow. More precisely,
let us focus our attention on the projection
\begin{equation} \label{divergent}
\begin{aligned}
&\left(q_{i(n_0)}(\gamma^*(s))-q_{j(n_0)}(\gamma^*(s)),\, v_{i(n_0)}(\gamma^*(s))-v_{j(n_0)}(\gamma^*(s))\right) \\
&=(\ol{q}(s),\, \ol{v}(s))\in\mathbb{R}^2\times\mathbb{R}^2,
\end{aligned}
\end{equation}
and on the lines
\begin{equation} \label{lines}
L(s):=\left\{\ol{q}(s)+t\cdot \ol{v}(s)\big|\; t\in\mathbb{R}\right\}\subset\mathbb{R}^2.
\end{equation}

The following proposition directly follows from the definition (\ref{w-nought}) of $w_0$ and from the definition 
of the curve $\gamma_0\subset\mathcal{SR}^+_0$.

\begin{prp} \label{rotation}
The lines $L(s)$ rotate about a point $A$ of $\mathbb{R}^2$ in Case A, whereas they are tangential to a given ellipse
of $\mathbb{R}^2$ in Case B.
\end{prp}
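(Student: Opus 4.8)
The plan is to compute the one-parameter family of lines $L(s)$ directly from the billiard dynamics and to read off its envelope, exploiting the fact that the perturbation direction $w_0$ is neutral through the first $n_0-1$ collisions. Since $w_0\in\mathcal{N}_0\left(V_0(y^*);\, g_1(y^*),\dots,g_{n_0-1}(y^*)\right)$, Proposition~\ref{variations} (together with the proof of Proposition~\ref{DDP}) shows that along $\gamma_0(s)$ the collisions $e_1,\dots,e_{n_0-1}$ occur with frozen geometry: the directions of all the relative velocities $v^\pm_{i(k)}-v^\pm_{j(k)}$ ($k\le n_0-1$), and hence the reflections $g_1,\dots,g_{n_0-1}$, do not vary with $s$, and the only effects of moving $s$ up to the reference time $t^*$ are a rescaling of the advances $\alpha_1,\dots,\alpha_{n_0-1}$ and of the collision moments $t_1,\dots,t_{n_0-1}$, together with a nontrivial rotation of the direction of $v^-_{i(n_0)}-v^-_{j(n_0)}$ forced by $w_0\notin\mathcal{N}_0\left(V_0(y^*);\, g_1(y^*),\dots,g_{n_0}(y^*)\right)$. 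Consequently $(\bar q(s),\bar v(s))$ is obtained from the initial relative position and velocity $\left(q_{i(n_0)}(\gamma_0(s))-q_{j(n_0)}(\gamma_0(s)),\, v_{i(n_0)}(\gamma_0(s))-v_{j(n_0)}(\gamma_0(s))\right)$ at $y^*$ by alternately applying free-flight shears and fixed reflections across the frozen collision normals, and by the Connecting Path Formula (\ref{CPF}) all the dependence on the advances is fractional linear; thus $L(s)=\{\bar q(s)+t\bar v(s):\, t\in\mathbb{R}\}$ is the image of the corresponding line at $y^*$ under a projective map of $\mathbb{RP}(2)$. Since a projective map carries a pencil of lines through a point to a pencil through a point, and a family tangent to a conic to a family tangent to a (possibly different) conic, it suffices to analyze the initial line family at $y^*$.

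In Case A the configuration is frozen, $q(\gamma_0(s))\equiv q(y^*)$, so the initial relative position $q_{i(n_0)}(y^*)-q_{j(n_0)}(y^*)$ does not depend on $s$ while the initial relative velocity direction rotates nontrivially; hence the initial family is literally a pencil of lines through one point, and therefore so is $\{L(s)\}$, which rotates about the point $A\in\mathbb{R}^2$ that is the projective image of $q_{i(n_0)}(y^*)-q_{j(n_0)}(y^*)$. (Because the re-collision moments after a double collision are rational functions of the velocity -- the relevant quadratic already has the root $t=\tau$ -- the whole family is a genuine rational pencil, so the first-order concurrency one verifies is in fact exact.) In Case B the configuration is perturbed only by $q(\gamma_0(s))=q(y^*)+\alpha w_0+\beta v(\gamma_0(s))$ with $\alpha=\text{O}(s^2)$ and $\beta=\text{O}(s)$; the term $\beta v(\gamma_0(s))$ lies along the flow direction and does not move the line $L(s)$, while the term $\alpha w_0$ is a second-order perturbation of the initial base point. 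A one-parameter family of lines that is a second-order perturbation of a pencil is tangent to a conic, and the sign of the quadratic coefficient in $\alpha(s)=\kappa s^2+o(s^2)$ is fixed by the positive second fundamental form of the round cylinder $C_{e'}$ responsible for the grazing singularity at $y^*$; hence the enveloped conic is nondegenerate of elliptic type, i.e. $\{L(s)\}$ is tangent to an ellipse.

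The step I expect to be the main obstacle is the precise justification of the reduction in the first paragraph when one of the intermediate collisions $e_k$ ($k<n_0$) involves the particle $i(n_0)$ or $j(n_0)$: then $L(s)$ is no longer simply the free-flight ray issuing from the initial relative position, and one has to check that composing the advance-dependent shears with the frozen reflections across such an $e_k$ still preserves concurrency in Case A and the elliptic type of the envelope in Case B. This is exactly what the Connecting Path Formula (\ref{CPF}) delivers, and the verification reduces to the same kind of bookkeeping already carried out in the text for the worked example $\Sigma=(e_1,e_2,e_3)$, rather than to any new idea; the only genuinely new geometric ingredient is the positivity of the curvature of the cylinders, used to rule out the non-elliptic conics in Case B.
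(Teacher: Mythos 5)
Your overall plan (freeze the reflections $g_1,\dots,g_{n_0-1}$ by the neutrality of $w_0$ and transport the initial line family at $y^*$ forward to time $t^*$) is in the spirit of what the paper intends -- indeed the paper gives no detailed argument, asserting the proposition ``directly follows'' from (\ref{w-nought}) and the construction of $\gamma_0$ -- but your execution has a genuine gap at its central step. You claim that $L(s)$ is the image of the corresponding line at $y^*$ under \emph{a} projective map of the plane, and then invoke ``projective maps send pencils to pencils and conic-tangent families to conic-tangent families.'' That transfer principle needs one fixed, $s$-independent map; what the dynamics gives you is a composition of free flights and frozen reflections whose flight times (the collision moments $t_k(s)$, i.e.\ the advances) vary with $s$, hence a \emph{different} map for each $s$. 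An $s$-dependent family of projective maps does not carry a pencil (one line per value of $s$) to a pencil. The fact that the $s$-dependence cancels is precisely the content of the proposition: along $\gamma_0(s)$ each relative velocity entering a frozen collision is rescaled while the corresponding flight time is rescaled reciprocally, so the displacement contributions (advance times velocity increment) organized by (\ref{CPF}) are independent of $s$, and only after this cancellation is verified does one obtain a fixed \emph{affine} correspondence between the family at $y^*$ and the family $\{L(s)\}$. Deferring exactly this verification to ``bookkeeping delivered by the CPF'' makes the reduction circular; the same cancellation, not your parenthetical about rational re-collision times, is what upgrades first-order concurrency to exact concurrency in Case A.

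The second gap is the ellipse claim in Case B. Even granting a projective correspondence, projective maps do not preserve the class of ellipses (a projective image of a circle may be a parabola or hyperbola), and your substitute arguments do not close this: the statement ``a one-parameter family of lines that is a second-order perturbation of a pencil is tangent to a conic'' is false in general (the envelope of such a family is just some curve; conic-ness comes from the specific form of $\alpha(s)$ dictated by the tangency constraint, not from its order of magnitude), and the sign of $\kappa$ in $\alpha(s)=\kappa s^2+o(s^2)$ cannot distinguish an ellipse from a locally convex arc of a parabola or hyperbola. What actually forces an ellipse is that, since $\gamma_0(s)$ stays on the tangency part of $\mathcal{SR}^+_0$, the relative line of the grazing pair is tangent to the circle of radius $2r$ for every $s$, and the correspondence produced by the frozen neutral dynamics (once the cancellation above is established) is affine; the envelope of $\{L(s)\}$ is then the affine image of that circle, i.e.\ an ellipse, degenerating to a point exactly in the subcase described in Remark \ref{degeneracy-warning}. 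With these two points supplied -- an explicit computation in the style of the worked example $\Sigma=(e_1,e_2,e_3)$ showing the affine correspondence, and the identification of the enveloped conic as the affine image of the grazing circle -- your argument becomes a correct proof; as written, both are missing ideas rather than omissible details.
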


\begin{rem} \label{degeneracy-warning}
We should note here that there is an exceptional subcase of Case B when the ellipse also degenerates to a point, just like in Case A.
This is the situation when the singularity at $y^*$ is a tangency but the projection $R_0(w_0)$ is parallel to the outgoing relative
velocity $v^+_{i(0)}-v^+_{j(0)}$ of the two particles $i(0)$ and $j(0)$ colliding tangentially at time zero. However, this degeneracy of the ellipse
does not cause any problem in the proof, for it is treated as the degeneracy in Case A.

We also note that in all of the cases above the directions of the lines $L(s)$ are properly changing at a non-zero rate, thanks to our
choice of $w_0$ with
\[
w_0\not\in\mathcal{N}_0\left(V_0(y^*);\, g_1(y^*),\dots,g_{n_0}(y^*)\right).
\]
\end{rem}

We remind the reader that, according to Proposition \ref{belongs-to-hyperplane}, the vectors
\[
r(\gamma_0(s))=\left(\ol{v}(s),\, \ol{v}^+(s)\right)
\]
belong to a given hyperplane $H(\gamma_0(0))=H(y^*)$ of $\mathbb{R}^4$ not depending on the parameter $s$. Here 
\begin{equation}\label{v-plus}
\ol{v}^+(s):=v^+_{i(n_0)}(\gamma_0(s))-v^+_{j(n_0)}(\gamma_0(s))
\end{equation}
denotes the outgoing $(i(n_0),j(n_0))$ relative velocity right after the collision $e_{n_0}=(i(n_0),j(n_0))$.
The reason why the hyperplanes $H(\gamma_0(s))$ are independent of $s$ is the following: Both the space and velocity
perturbations $q(\gamma_0(s))-q(\gamma_0(0))$ and $v(\gamma_0(s))-v(\gamma_0(0))$ belong to the neutral space 
$\mathcal{N}_0\left(V_0(y^*);\, g_1(y^*),\dots,g_{n_0-1}(y^*)\right)$ and, futhermore, they are proportional to each other.
One proves by a standard ``continuous induction'' that these properties remain true all the way until time $t^*$, thus
the (incoming and outgoing) relative velocities of the collisions $g_1, g_2,\dots,g_{n_0-1}$ are independent of the perturbation
parameter $s$.

\medskip

The proof of the Theorem will be complete as soon as we prove our

\begin{prp} \label{no-hyperplane}
Let $C_1\subset\mathbb{R}^2$ be an ellipse, possibly degenerated to a single point, $C_2\subset\mathbb{R}^2$
be a circle, so that none of $C_1$ or $C_2$ is lying inside the other one, i. e. they have at least two common tangent
lines. Suppose that $L(s)$, $|s|<\vep_0$, is a smooth family of oriented lines in $\mathbb{R}^2$ 
with the direction vector $\ol{v}(s)$ satisfying the following conditions:

\begin{enumerate}
\item[$(i)$] $L(s)$ is tangent to $C_1$ at the point of contact $A(s)$, and at $A(s)$ the direction vector $\ol{v}(s)$
agrees with a given orientation of $C_1$ (if $C_1$ is not a point),
\item[$(ii)$] $L(s)$ intersects $C_2$ in two points, out of which the one whose position vector makes the
smaller inner product with $\ol{v}(s)$ is denoted by $B(s)$,
\item[$(iii)$] $\dfrac{d}{ds}\alpha\left(\ol{v}(s)\right)>0$ for all $s$, $|s|<\vep_0$.
\end{enumerate}
Here $\alpha(\ol{v}(s))$ denotes the direction angle of the vector
$\ol{v}(s)$. Finally, let $\ol{v}^+(s)$ be the mirror image of $\ol{v}(s)$ under the orthogonal reflection
across the tangent line of the circle $C_2$ at the point $B(s)$.

We claim that there is no hyperplane $H\subset\mathbb{R}^2\times\mathbb{R}^2$ containing all the points
$(\ol{v}(s),\, \ol{v}^+(s))$ for $|s|<\vep_0$.
\end{prp}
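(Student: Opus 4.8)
We want to show no single affine hyperplane $H\subset\mathbb{R}^4$ can contain the curve $s\mapsto(\bar v(s),\bar v^+(s))$. A hyperplane in $\mathbb{R}^2\times\mathbb{R}^2$ is cut out by a single linear equation $\langle a,\bar v(s)\rangle+\langle b,\bar v^+(s)\rangle=c$ for some fixed $(a,b)\in\mathbb{R}^2\times\mathbb{R}^2$ not both zero and some constant $c$. The idea is to parametrize everything by the direction angle $\theta=\alpha(\bar v(s))$; by condition $(iii)$ this is a legitimate change of variable (a diffeomorphism onto an open interval), so it suffices to show the planar curve $\theta\mapsto(\bar v(\theta),\bar v^+(\theta))$ does not lie on any affine hyperplane. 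Writing $\bar v(\theta)=\rho_1(\theta)(\cos\theta,\sin\theta)$ for the appropriate ``radius'' function (the distance from the contact point $A(\theta)$ on $C_1$, or simply a nonzero scaling — in the degenerate case $C_1=\{A\}$ one may even normalize $\bar v$ to be a unit vector, but it is cleaner to keep the geometry), and working out $B(\theta)$ as the selected intersection point of $L(\theta)$ with the circle $C_2$, one obtains explicit formulas; then $\bar v^+(\theta)$ is the reflection of $\bar v(\theta)$ across the tangent to $C_2$ at $B(\theta)$.

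First I would reduce to the genuinely two–dimensional heart of the matter. The reflection of a vector of direction angle $\theta$ across a line of direction angle $\psi$ has direction angle $2\psi-\theta$. Here $\psi=\psi(\theta)$ is the direction angle of the tangent to the circle $C_2$ at $B(\theta)$, equivalently (up to $\pi/2$) the angle of the radius $O_2B(\theta)$. So $\bar v^+(\theta)$ points in direction $2\psi(\theta)-\theta$, and its length equals $|\bar v(\theta)|$ (orthogonal reflections are isometries). The membership of the curve in $H$ becomes a single scalar identity in $\theta$ involving $\rho_1(\theta)$, $\cos\theta,\sin\theta$, and $\cos(2\psi(\theta)-\theta),\sin(2\psi(\theta)-\theta)$. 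The plan is to show this identity forces $(a,b,c)=0$, contradicting that $H$ is a hyperplane.

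The mechanism I would exploit is analyticity combined with a limiting/asymptotic argument. The functions involved ($\rho_1$, $\psi$, the trigonometric terms) are real–analytic in $\theta$ on the interval, so if the scalar relation $\langle a,\bar v(\theta)\rangle+\langle b,\bar v^+(\theta)\rangle\equiv c$ holds on a subinterval it holds wherever the curve is analytically defined. I would then let $\theta$ approach the value $\theta_*$ at which $L(\theta)$ becomes tangent to $C_2$ — this exists precisely because of the hypothesis that $C_1$ and $C_2$ have at least two common tangent lines and neither lies inside the other, so the family $L(s)$, which sweeps out tangent lines to $C_1$, must cross the position of a common tangent and hence pass through configurations arbitrarily close to tangency with $C_2$ (and, on the other side, through configurations where $L$ misses $C_2$ — at which boundary $B(\theta)$ degenerates). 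Near such a $\theta_*$ the two intersection points of $L(\theta)\cap C_2$ coalesce, $B(\theta)\to$ the tangency point, and the derivative $dB/d\theta$ (equivalently $d\psi/d\theta$) blows up like $(\theta_*-\theta)^{-1/2}$. Differentiating the putative identity in $\theta$ and examining the leading singular term of each side as $\theta\to\theta_*$ shows the coefficient of the dominant singularity must vanish; this yields a linear constraint on $(a,b)$. In the degenerate sub‑case where $C_1$ is a point, $\bar v(\theta)$ is bounded (even of constant length) while $\bar v^+(\theta)$ carries the singular behaviour, so the dominant term isolates $b$; in the nondegenerate case a second constraint comes from the behaviour of $\rho_1(\theta)$ and the genuine motion of $A(\theta)$ along $C_1$. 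Iterating — taking further derivatives, or using a second accessible singular value of $\theta$ (the other common tangent, or the endpoint where $L$ ceases to meet $C_2$) — I would extract enough independent linear relations to conclude $a=b=0$ and then $c=0$.

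I expect the main obstacle to be the careful bookkeeping of the asymptotics of $B(\theta)$ and $\psi(\theta)$ near the tangency value $\theta_*$, and making rigorous the claim that such a $\theta_*$ (or an analogous ``threshold'' value) is actually attained within, or on the analytic continuation of, the given family $L(s)$ — this is where the geometric hypothesis that $C_1$ and $C_2$ are not nested and share $\ge 2$ common tangents really does the work, and it must be used, since the statement is false if one conic is strictly inside the other (then the two reflection data can be affinely dependent). A cleaner alternative, which I would try first, is purely computational: write $\bar v(\theta)$ and $\bar v^+(\theta)$ out explicitly as analytic vector functions, form the $4\times 5$ (or larger) matrix whose rows are $(\bar v(\theta_k),\bar v^+(\theta_k),1)$ for five generic values $\theta_1,\dots,\theta_5$, and show its rank is $5$ (hence no $(a,b,c)\ne 0$ kills all rows) by evaluating a $5\times5$ minor — reducing the whole proposition to checking one explicit nonvanishing determinant, separately in Case A (point $C_1$) and Case B (proper ellipse $C_1$), with Remark~\ref{degeneracy-warning} telling us the degenerate sub‑case of B folds into A. Whichever route, the conceptual point is that reflection in a moving circle injects a transcendental (non‑affine) dependence between incoming and outgoing relative velocities that no single linear relation can absorb.
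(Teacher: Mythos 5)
Your overall mechanism --- analytically continuing the family of lines until $L$ becomes tangent to $C_2$, and exploiting what happens at such a parameter, with the non-nestedness/common-tangent hypothesis guaranteeing that such parameters exist --- is the same one the paper uses, and your remark that the hypothesis is genuinely needed is sound (lines through the center of $C_2$ give $\ol{v}^+\equiv-\ol{v}$, which does lie in a hyperplane). But as written the argument has real gaps. The step you yourself flag as the ``main obstacle,'' namely that a tangency parameter is actually reached by the analytic continuation of $L(s)$, $B(s)$, $\ol{v}^+(s)$ with properties (i)--(iii) preserved, is precisely what must be proved and is left open; the paper settles it by extending the family to an interval $[a,b]$ whose endpoint lines $L(a),L(b)$ are tangent to $C_2$, the containment in $H$ persisting by analyticity. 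Moreover, your asymptotic engine is both heavier than necessary and not carried out: you never record the elementary fact that at a tangency parameter $\ol{v}^+=\ol{v}$, which is all the paper needs --- it places the two points $(\ol{v}(a),\ol{v}(a))$ and $(\ol{v}(b),\ol{v}(b))$ in $H$, hence (when $\ol{v}(a)\not\parallel\ol{v}(b)$) the whole diagonal, forcing all differences $x-y$ with $(x,y)\in H$ to be mutually parallel, which contradicts the rotation of $\ol{v}^+(s)-\ol{v}(s)$ along the moving collision normal. Your $\sqrt{\theta_*-\theta}$ expansion, if worked out, constrains only $b$ (giving $b\parallel\ol{v}(\theta_*)$ at each tangency), not ``$(a,b)$''; this suffices when the two reachable tangent directions are independent, after which $a$ and $c$ are killed by the rotating $\ol{v}$, but none of this is actually executed.

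The most serious omission is the degenerate configuration in which the only reachable tangent directions are antiparallel, $\ol{v}(b)=-\ol{v}(a)$ (e.g.\ $C_1$ a point on $C_2$): there your two tangencies yield the same single constraint on $b$, and ``iterating, taking further derivatives'' is not an argument --- this is exactly where the paper has to work hardest, via the limit $(s-a)^{-1/2}\bigl[(\ol{v}(s),\ol{v}^+(s))-(\ol{v}(a),\ol{v}^+(a))\bigr]\to(0,\xi)\in H$, the auxiliary vectors $\eta(s)$, and the final perpendicularity contradiction. Finally, your ``cleaner alternative'' of exhibiting a nonzero $5\times 5$ minor at five generic parameter values is not a proof as stated: since the claim is false for nested conics, the nonvanishing of that determinant cannot follow from genericity alone but must be computed using the hypothesis, which is no easier than the geometric argument it was meant to replace.
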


\begin{proof}
A simple geometric inspection. We can assume, without restricting generality, that $\Vert\ol{v}(s)\Vert =1$.
We prove the proposition in the case when $C_1$ and $C_2$ have at least two common,
non-parallel tangent lines. The proof for the exceptional case, when this hypothesis is not satisfied, can be done with
some modifications, which we will show below right after completing the proof by using the hypothesis.

First of all, we can assume that the lines $L(s)$ depend on the parameter $s$
analytically. Then one can analytically extend the family of lines $L(s)$ to an interval of parameters
$I=[a,b]\supset (-\vep_0,\vep_0)$ by preserving all properties (i)--(iii) above so that $L(a)$ and $L(b)$ are non-parallel and
tangent to the circle $C_2$. If there was a hyperplane $H\subset\mathbb{R}^2\times\mathbb{R}^2$ containing all points
$(\ol{v}(s),\, \ol{v}^+(s))$ for $|s|<\vep_0$ then, by the reason of analyticity, the same containment 
$(\ol{v}(s),\, \ol{v}^+(s))\in H$ would be true for all $s$, $a\le s\le b$. Now we have that
\[
\begin{aligned}
& (\ol{v}(a),\, \ol{v}(a))\in H, \\
& (\ol{v}(b),\, \ol{v}(b))\in H,
\end{aligned}
\]
so $H$ contains the diagonal $\left\{(x,x)\big|\; x\in\mathbb{R}^2\right\}$ and, consequently,
the difference vectors $x-y$ for all $(x,y)\in H$ are parallel to each other.
But this is impossible, for the difference vectors $\ol{v}^+(s)-\ol{v}(s)$ can obviously rotate as $s$
varies in the parameter interval.

Finally, we show how to proceed in the case when $\ol{v}(a)$ and $\ol{v}(b)$ are parallel, i. e.
$\ol{v}(b)=-\ol{v}(a)$. We assume, contrary to the claim of the proposition, that there exists a
hyperplane $H\subset\BR^2\times\BR^2$ containing all vectors $(\ol{v}(s),\,\ol{v}^+(s))$, $s\in I$.
We take the limit 
\[
\lim_{s\to a^+} (s-a)^{-1/2}\left[(\ol{v}(s),\,\ol{v}^+(s))-(\ol{v}(a),\,\ol{v}^+(a))\right]
=(0,\,\xi)\in H,
\]
where $\xi\in\BR^2$, $\xi\ne 0$, $\xi\perp\ol{v}(a)$. This shows that for every $s\in I$ the vector
\[
\eta(s)=\ol{v}(s)-\langle\ol{v}^+(s), \ol{v}(a)\rangle\cdot\ol{v}(a)
\]
has the property that $(\eta(s), 0)\in H$. The vectors $\eta(s)$ ($s\in I$) must be mutually parallel, otherwise the
three-dimensional subspace $H$ of $\BR^2\times\BR^2$ would be equal to $\BR^2\times\langle\xi\rangle$, which would mean that
all outgoing vectors $\ol{v}^+(s)$ are parallel to $\xi$, but this is clearly not the case.

Denote the common line containing all the vectors $\eta(s)$ by $\mathcal{L}$. Clearly $\mathcal{L}$ is not parallel to
the vector $\ol{v}(a)$. We claim that $\mathcal{L}\perp\ol{v}(a)$. Indeed, the vectors $\ol{v}(s)$, $s\in I$, fill out one half of the
unit circle, thus in the case $\mathcal{L}\not\perp\ol{v}(a)$ there would be a parameter value $s$, $a<s<b$, such that
$\text{dist}(\ol{v}(s),\, \eta(s))>1$, which is impossible, for 
\[
\text{dist}(\ol{v}(s),\, \eta(s))=\left|\langle\ol{v}^+(s),\, \ol{v}(a)\rangle\right|\le 1.
\]
The fact $\mathcal{L}\perp\ol{v}(a)$, however, implies that 
$\langle\ol{v}^+(s)-\ol{v}(s),\, \ol{v}(a)\rangle=0$ for all $s\in I$, which is clearly a contradiction, since the nonzero
difference vectors $\ol{v}^+(s)-\ol{v}(s)$ are parallel to the rotating collision normal.
\end{proof}

\medskip

Finally, we complete the proof of the Theorem in the (somewhat more difficult) case $\nu\ge 3$, as follows.

We consider an arbitrary phase point $y_0\in J$, select the time $\tau<0$ and, correspondingly, the phase point
$y^*=S^\tau y_0$ just as before. Furthermore, the selection of a suitable tangent vector $w_0$ of (\ref{w-nought}), 
the construction of the smooth curve $\gamma_0(s)\in\mathcal{SR}_0^+$ ($|s|<\vep_0$), the selection of the separating
time $t^*$, the construction of the vectors
\[
(\ol{q}(s),\, \ol{v}(s))\in\mathbb{R}^\nu\times\mathbb{R}^\nu
\]
of (\ref{divergent}) and the construction of the lines
\[
L(s):=\left\{\ol{q}(s)+t\cdot \ol{v}(s)\big|\; t\in\mathbb{R}\right\}\subset\mathbb{R}^\nu
\]
of (\ref{lines}) are similar to what we did above in the case $\nu=2$, but now we have to exercise more care in the selection
of the neutral tangent vector $w_0$ of (\ref{w-nought}), see below.

Suppose, for a moment, that we have already chosen a suitable tangent vector
\[
w_0\in\mathcal{N}_0\left(V_0(y^*);\, g_1(y^*),\dots,g_{n_0-1}(y^*)\right)\setminus
\mathcal{N}_0\left(V_0(y^*);\, g_1(y^*),\dots,g_{n_0}(y^*)\right)
\]
of (\ref{w-nought}). 

The counterpart of Proposition \ref{rotation} is

\begin{prp} \label{counterpart_prop}
All the lines $L(s)$ ($|s|<\vep_0$) lie in the same two-dimensional affine subspace 
$\mathcal{P}=\mathcal{P}(y^*,w_0)$ of $\mathbb{R}^\nu$. These lines
rotate about a point $A$ of $\mathcal{P}$ in Case A, whereas they are tangential to a given
ellipse $C_1$ of $\mathcal{P}$ in Case B.
\end{prp}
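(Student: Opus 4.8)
\textbf{Proof proposal for Proposition~\ref{counterpart_prop}.}
The plan is to reduce the higher-dimensional statement to the planar geometry already used in the $\nu=2$ case by identifying, at the outset, the correct two-dimensional affine subspace $\mathcal{P}$. First I would record the two defining constraints on the perturbed curve $\gamma_0(s)\subset\mathcal{SR}_0^+$ at time zero: the velocity perturbation $v(\gamma_0(s))-v(y^*)$ is (to first order) a positive multiple of the fixed neutral vector $w_0$, and the space perturbation $q(\gamma_0(s))-q(y^*)$ lies in the span of $w_0$ and $v(y^*)$ (exactly $q$-constant in Case~A, and $\alpha w_0+\beta v(\gamma_0(s))$ with $\alpha=\mathrm{O}(s^2)$, $\beta=\mathrm{O}(s)$ in Case~B). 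Both the initial velocity displacement and the initial configuration displacement therefore lie in the $2$-plane $W=\langle w_0, v(y^*)\rangle\subset\mathcal{Z}$, and — crucially — since $w_0\in\mathcal{N}_0(V_0(y^*);g_1(y^*),\dots,g_{n_0-1}(y^*))$, the same ``continuous induction'' argument already invoked right before Proposition~\ref{no-hyperplane} shows that all along the trajectory, up to time $t^*$, the displacement $(q,v)(\gamma^*(s))-(q,v)(\gamma^*(0))$ stays (to leading order in $s$) inside the image of $W$ under the linearized flow, and that the relative velocities of the collisions $g_1,\dots,g_{n_0-1}$ do not move. Projecting onto the $(i(n_0),j(n_0))$-coordinate via $R_{n_0}$, we get that both $\ol{q}(s)-\ol{q}(0)$ and $\ol{v}(s)-\ol{v}(0)$ lie in a fixed $2$-dimensional (at most) subspace of $\mathbb{R}^\nu$; together with the base point $(\ol{q}(0),\ol{v}(0))$ this pins the lines $L(s)$ into a single $2$-dimensional affine subspace $\mathcal{P}=\mathcal{P}(y^*,w_0)$. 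One should check that this subspace is genuinely $2$-dimensional and not lower: that is exactly the content of the last paragraph of Remark~\ref{degeneracy-warning}, i.e. the direction $\ol{v}(s)$ truly rotates because $w_0\notin\mathcal{N}_0(V_0(y^*);g_1(y^*),\dots,g_{n_0}(y^*))$.

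Once $\mathcal{P}$ is in hand, the envelope statement is a verbatim replay of Proposition~\ref{rotation} performed inside $\mathcal{P}\cong\mathbb{R}^2$. In Case~A (double collision, $q$ held constant) all lines $L(s)$ pass through the common point $A=\ol{q}(0)\in\mathcal{P}$, so they rotate about $A$. In Case~B (tangency) the point of contact and the direction of $L(s)$ are governed by the tangency condition defining $\mathcal{SR}_0^+$, which is a quadratic constraint; the standard computation (the same one that produces the ellipse in the planar case, cf. the worked example $\Sigma=(e_1,e_2,e_3)$) shows $L(s)$ is tangent to a fixed conic $C_1\subset\mathcal{P}$, an ellipse, possibly degenerate to a point in the exceptional subcase flagged in Remark~\ref{degeneracy-warning} — and in that subcase we simply fold it into Case~A. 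The orientation/monotonicity clause (condition (iii) of Proposition~\ref{no-hyperplane}) is inherited from $\frac{d}{ds}\alpha(\ol v(s))>0$, which is guaranteed by the non-degenerate choice of $w_0$.

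The main obstacle I anticipate is \emph{not} the conic-envelope bookkeeping but the verification that the two-plane $\mathcal{P}$ is well-defined and stable under the flow — i.e. that the linearized dynamics between time $0$ and time $t^*$ does not spread the displacement $(q,v)(\gamma^*(s))$ out of a $2$-dimensional slice once projected by $R_{n_0}$. Naively the linearized flow acts on all of $\mathcal{Z}\cong\mathbb{R}^d$, so one must use the neutrality of $w_0$ with respect to $g_1,\dots,g_{n_0-1}$ (Proposition~\ref{DDP} and Proposition~\ref{variations}) to argue that, through those first $n_0-1$ collisions, the perturbation stays in a $1$-parameter family whose $q$- and $v$-parts remain proportional and collinear with $w_0$ and the reference velocity; only the free-flight segment between $t_{n_0-1}$ and $t^*$ then adds the single extra direction $t\cdot\ol v(s)$, giving exactly two dimensions. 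Making this ``continuous induction'' airtight — in particular handling Case~B's second-order correction $\alpha=\mathrm{O}(s^2)$ so that it does not contribute a spurious third direction to leading order — is where the real care is needed; everything after that is the planar picture of Proposition~\ref{rotation} transported into $\mathcal{P}$.
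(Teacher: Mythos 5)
Your proposal is correct and follows essentially the route the paper itself intends: the paper gives no separate proof of Proposition~\ref{counterpart_prop}, treating it as the counterpart of Proposition~\ref{rotation} and resting on the construction of $\gamma_0$ together with the neutrality/``continuous induction'' paragraph preceding Proposition~\ref{no-hyperplane}, which is exactly what you reconstruct (confinement of the perturbation through $g_1,\dots,g_{n_0-1}$, projection by $R_{n_0}$, then the planar envelope picture inside $\mathcal{P}$). One refinement: drop the ``to leading order in $s$'' hedging --- since the velocity curve lies exactly in $\text{span}\{v(y^*),w_0\}$ and $w_0$ is neutral for $g_1,\dots,g_{n_0-1}$, those reflections are literally independent of $s$, so both $\ol{v}(s)$ and the $R_{n_0}$-projected displacement $\ol{q}(s)-\ol{q}(0)$ lie \emph{exactly} in $\mathcal{P}'=\text{span}\left\{R_{n_0}\left(V_{n_0-1}(y^*)\right),\, R_{n_0}\left(w_0g_1\cdots g_{n_0-1}\right)\right\}$ (the Case~B correction $\alpha w_0$ adds no new direction at any order), and this exact, not merely first-order, containment is what the later steps (exact tangency to $C_1$ and the analytic extension in Proposition~\ref{counterpart_geometry}) actually require.
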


\begin{rem} \label{degeneracy-warning-2}
We note that here Remark \ref{degeneracy-warning} again applies.
\end{rem}

Consider the smallest linear subspace $S=S(y^*,w_0)\subset\mathbb{R}^\nu$ of $\mathbb{R}^\nu$ containing the
affine plane $\mathcal{P}$. Clearly the dimension of $S$ is $3$ or $2$. By algebraic reasons there are two 
possibilities: Either the space $S=S(y^*,w_0)$ is $3$-dimensional for a typical pair $(y^*,w_0)$
($y^*=S^\tau y_0\in\mathcal{SR}_0^+$, $y_0\in U_0$, $w_0\in\mathcal{N}_0\left(V_0(y^*);\, g_1(y^*),\dots,g_{n_0-1}(y^*)\right)$,
$w_0\perp v(y_0)$), and in this situation we can assume that $\text{dim}S(y^*,w_0)=3$ {\it always} in our local analysis by choosing
a small enough open set $U_0$, or $\text{dim}S(y^*,w_0)=2$ for {\it all} such considered pairs. The next lemma shows that the latter
case is actually impossible.

\begin{lm}\label{high-dim}
It is not possible that $\text{dim}S(y^*,w_0)=2$ for every $y^*\in\mathcal{SR}_0^+$ ($y^*=S^\tau y_0$, $y_0\in U_0$) and for every
\[
w_0\in\mathcal{N}_0\left(V_0(y^*);\, g_1(y^*),\dots,g_{n_0-1}(y^*)\right)\setminus
\mathcal{N}_0\left(V_0(y^*);\, g_1(y^*),\dots,g_{n_0}(y^*)\right),
\]
$w_0\perp v(y^*)$.
\end{lm}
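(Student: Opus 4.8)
The plan is to argue by contradiction: suppose $\dim S(y^*,w_0)=2$ for \emph{all} admissible pairs $(y^*,w_0)$, so that the affine plane $\mathcal P(y^*,w_0)$ and its underlying linear span $S(y^*,w_0)$ do not actually depend on the choice of $w_0$ from the prescribed neutral-space difference. The first step is to exploit this rigidity: as $w_0$ ranges over the affine-linear variety
\[
\mathcal{N}_0\left(V_0(y^*);\, g_1(y^*),\dots,g_{n_0-1}(y^*)\right)\setminus
\mathcal{N}_0\left(V_0(y^*);\, g_1(y^*),\dots,g_{n_0}(y^*)\right)
\]
(intersected with $w_0\perp v(y^*)$), the corresponding $2$-plane $S$ stays fixed. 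Since by Lemma~\ref{nzero-essential} and Lemma~\ref{no-distinguish} this difference set is nonempty and in fact has dimension at least one transversally (the collision $e_{n_0}$ genuinely drops the neutral dimension and does so uniformly on $J$ and off $J$), we get at least a $1$-parameter family of such vectors $w_0$ all landing, after the flow up to $t^*$, in the same $2$-plane of $\mathbb R^\nu$. Tracking how $\mathcal P$ is built — it is spanned by $\ol v(0)=v^+_{i(n_0)}(y^*)-v^-_{j(n_0)}(y^*)$-type data together with the image of $R_{n_0}(w_0)$ carried through $g_1\cdots g_{n_0-1}$ — this forces the directions $R_{n_0}\!\left(w_0\cdot g_1\cdots g_{n_0-1}\right)$ for all such $w_0$ to lie in a common $2$-dimensional space.

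The second step is to convert that constraint back, via the Direction Determination Principle (Proposition~\ref{DDP}) and Proposition~\ref{variations}, into a statement about the neutral spaces themselves. If the image $R_{n_0}\!\left(\mathcal{N}_{n_0-1}(V_0(y^*);\,g_1,\dots,g_{n_0-1})\right)$ — which by the dimension count in the proof of Lemma~\ref{no-distinguish} has dimension $\Delta(e_1,\dots,e_{n_0-1})-\Delta(e_1,\dots,e_{n_0})+1\ge 2$ — were forced into a fixed $2$-plane for \emph{every} $y^*$ arising as a past singularity, then the hyperplane $H(x)$ of Proposition~\ref{belongs-to-hyperplane} would itself be pinned down by strictly fewer directional parameters than $n_0$ needs, i.e. the relative velocities listed in (ii) of Definition~\ref{n-zero} would already determine membership $x\in J$. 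That contradicts the minimality in the definition of $n_0$. So the heart of the argument is: \emph{global} $2$-dimensionality of $S(y^*,w_0)$ is incompatible with the defining minimality property of the critical index $n_0$.

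The third step handles the degenerate geometry flagged in Remark~\ref{degeneracy-warning-2}: in the sub-case of Case~B where the ellipse $C_1$ collapses to a point, the argument is routed through Case~A exactly as in the $\nu=2$ analysis, so nothing new is needed there. One should also note that the "two algebraic possibilities" dichotomy ($\dim S=3$ generically, or $\dim S=2$ identically) is legitimate because the dimension of the span of an algebraically-parametrized family of affine planes is lower semicontinuous and, on the algebraic variety of admissible $(y^*,w_0)$, takes a generic value that can be realized on an open dense set — this is the same kind of "regular behavior on a small ball" reduction invoked for $U_0$ near the start of \S\ref{main-section}.

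I expect the main obstacle to be making rigorous the claim that a fixed $2$-plane for all $w_0$ actually \emph{reduces the number of essential directional parameters} — one has to carefully check that the map $w_0\mapsto R_{n_0}(w_0\cdot g_1\cdots g_{n_0-1})$ is, for fixed directions of the first $n_0-1$ relative velocities, linear and surjective onto $R_{n_0}(\mathcal N_{n_0-1})$, so that confining its image to a $2$-plane really does impose a nontrivial algebraic relation among the $(n_0-1)$ earlier direction parameters and thereby collapses the hyperplane family $H(x)$ to depend on fewer of them. Once that linear-algebra bookkeeping is pinned down, the contradiction with property (ii) of Definition~\ref{n-zero} is immediate, and the lemma follows.
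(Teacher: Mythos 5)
Your proposal does not follow the paper's route, and its central step has a genuine gap. The paper proves Lemma~\ref{high-dim} geometrically: if $\text{dim}\,S(y^*,w_0)=2$ held identically, then the affine plane $\mathcal{P}$ swept by the lines $L(s)$ would pass through the origin of the relative configuration space of the pair $(i(n_0),j(n_0))$, so by rotating along the curves $\gamma_0(s)\subset\mathcal{SR}_0^+$ (extending beyond $U_0$ and using a ``phantom'' dynamics to freeze the symbolic sequence $(e_1,\dots,e_{n_0-1})$) one would reach a head-on $n_0$-th collision on a \emph{codimension-one} subset of $\mathcal{SR}_0^+$; but the foliation of $\mathcal{SR}_0^+$ by convex local orthogonal manifolds (\S 4 of \cite{K-S-Sz(1990)}) forces the head-on condition to have codimension $\nu-1\ge 2$ inside $\mathcal{SR}_0^+$, a contradiction. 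Your argument never touches the structure of the singularity manifold, which is precisely the ingredient that makes the hypothesis $\nu\ge 3$ bite.

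The gap in your version is the claimed contradiction with property (ii) of Definition~\ref{n-zero}. First, the ``rigidity'' you start from (that $S(y^*,w_0)$ is the same plane for all admissible $w_0$) is asserted, not derived; it can be salvaged only by observing that $\text{dim}\,S=2$ means $0\in\mathcal{P}$, so that $S$ must coincide with the linear span of the unperturbed line $L(0)$, i.e.\ with $\text{span}\{\ol{q}(0),\ol{v}(0)\}$ --- but note this plane is determined by \emph{configuration} data at time $t^*$, not by the directions of the first $n_0-1$ relative velocities. Second, and decisively, confining $R_{n_0}\left[\mathcal{N}_{n_0-1}\left(V_0(y^*);\,g_1(y^*),\dots,g_{n_0-1}(y^*)\right)\right]$ to a $2$-plane does not imply that the velocity directions listed in (ii) of Definition~\ref{n-zero} determine membership in $J$: Proposition~\ref{belongs-to-hyperplane} characterizes $x\in J$ by the pair $r(x)$ lying in a hyperplane $H(x)$ depending on those earlier directions, and the containment you extract neither pins down $H(x)$ nor forces $r(x)\in H(x)$ for whole leaves $\mathcal{W}_{n_0-1}$ (which is what the transversality clause of Definition~\ref{n-zero} would require, as in the proof of Proposition~\ref{good_neutral}). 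Worse, your own dimension count only gives $\text{dim}\,R_{n_0}[\mathcal{N}_{n_0-1}]\ge 2$, and when it equals $2$ the confinement to a $2$-plane is vacuous, so no contradiction of the kind you seek can follow in general. The ``linear-algebra bookkeeping'' you flag as the main obstacle is therefore not a technicality to be pinned down but the missing idea itself; the lemma needs the singularity-manifold codimension argument (or some substitute using $\nu\ge3$), which your proposal does not supply.
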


\begin{proof}
By way of contradiction, assume that $\text{dim}S(y^*,w_0)=2$ is always the case. This means that the velocities of the phase points
$y^*$ can be rotated along the curves $\gamma_0(s)\subset\mathcal{SR}_0^+$ in such a way that we obtain an $n_0$-th collision with a
collision normal vector parallel to the relative velocity of the colliding particles $i(n_0)$ and $j(n_0)$. (A so called ``head-on collision''.)
It is clear that the foliation of the manifold $\mathcal{SR}_0^+$ into the curves $\gamma_0(s)$ can be chosen to be smooth. Furthermore,
in order to reach a head-on collision from a given phase point $y^*\in\mathcal{SR}_0^+\cap U_0$ via the curve $\gamma_0(s)$ (with
$\gamma_0(0)=y^*$) it may be necessary to leave the small-sized local neighborhood $U_0$ in which we are working. During the
perturbation along the curve $\gamma_0(s)$ the times $t_k=t(e_k)$ of the collisions $e_k$ ($k=1,2,\dots, n_0-1$) also change, and this could change
the symbolic collision structure of the considered orbit segments. To avoid this problem, during the considered perturbations along the curves
$\gamma_0(s)$ we delete all hard core potentials of unduly arising new collisions, i. e. we allow two particles to freely overlap each other
if they would produce a collision not in the prescribed symbolic sequence $(e_1,e_2,\dots,e_{n_0-1})$. (A so called phantom dynamics.)

The above mean that the phase points $y^*\in\mathcal{SR}_0^+$ with head-on collisions $e_{n_0}$ form a codimension-one submanifold inside
$\mathcal{SR}_0^+$. However, this is impossible, since the singularity manifold $\mathcal{SR}_0^+$ can be smoothly foliated by convex, local
orthogonal manifolds, see \S4 in \cite{K-S-Sz(1990)}, and this shows that the codimension in $\mathcal{SR}_0^+$ of the set of phase points 
$y^*$ with a head-on collision $e_{n_0}$ is $\nu-1$, which is now at least $2$, a contradiction.
\end{proof}

Therefore, we may and we shall assume that the phase point $y^*=S^\tau y_0\in\mathcal{SR}_0^+$ is chosen 
(and fixed) in such a way that for the typical selection of $w_0$ in (\ref{w-nought}) it is true that
$\text{dim}S(y^*,w_0)=3$.

It is clear that the vector 
\[
r(\gamma_0(s))=\left(\ol{v}(s),\, \ol{v}^+(s)\right)
\]
varies in the $5$-dimensional linear subspace 
\[
\mathcal{P}'\times S\subset\mathbb{R}^\nu\times\mathbb{R}^\nu
\]
of $\mathbb{R}^{2\nu}$, where $\mathcal{P}'=\mathcal{P}'(y^*,w_0)$ is the $2$-dimensional linear subspace of $\mathbb{R}^\nu$
parallel to $\mathcal{P}$.

Let us focus on the hyperplane $H(\gamma_0(s))=H(y^*)$ of $\mathbb{R}^{2\nu}$, defined as
before. For the proof of the fact $H(\gamma_0(s))=H(\gamma_0(0))$ please see the paragraph containing
(\ref{v-plus}). The fact that the velocities $V_0(x),V_1(x),\dots,V_{n_0-1}(x)$ do not determine if the relation
$x\in J$ is true or not, has the following consequence.

\medskip

\begin{prp} \label{good_neutral}
For every singular phase point $y^*$ the neutral vector $w_0$ of \ref{w-nought} can be chosen in such a way that the hyperplane
$H(\gamma_0(s))=H(y^*)$ does not contain the subspace $\mathcal{P}'\times S$, i. e. 
$\text{dim}\left[(\mathcal{P}'\times S)\inter H(y^*)\right]=4$.
\end{prp}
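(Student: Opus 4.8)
The plan is to argue by contradiction: suppose that for the given singular phase point $y^*$ every admissible neutral vector $w_0$ of (\ref{w-nought}) produces a hyperplane $H(y^*)=H(\gamma_0(s))$ that contains the entire $5$-dimensional subspace $\mathcal{P}'\times S$. Recall that $H(y^*)$ was obtained in Proposition \ref{belongs-to-hyperplane} as the solution locus of a single linear equation in $r(x)=(v^-_{i(n_0)}-v^-_{j(n_0)},\, v^+_{i(n_0)}-v^+_{j(n_0)})$, the equation coming from the vanishing of the relevant minor $\mathcal{M}$ of the CPF system (\ref{CPF}) with maximal column index $n_0$. The coefficients of this linear equation depend only on the \emph{directions} of the earlier relative velocities $v^\pm_{i(k)}-v^\pm_{j(k)}$ ($k<n_0$), which by the construction preceding (\ref{v-plus}) are held fixed along the curve $\gamma_0(s)$; this is precisely why $H$ is independent of $s$. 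The key point to extract is: if $H(y^*)\supset\mathcal{P}'\times S$, then the linear functional defining $H$ restricts to $0$ on the entire coordinate subspace $\{0\}\times S$, i.e. the ``outgoing'' block of coefficients in the defining equation of $H$ annihilates all of $S=S(y^*,w_0)$, the $3$-dimensional span of the relevant images.

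First I would reformulate the hypothesis in terms of the critical index. By Lemma \ref{no-distinguish} the collision $e_{n_0}$ does not distinguish $J$ from $U_0\setminus J$, and by Definition \ref{n-zero}(ii) the directions of $v^\pm_{i(k)}-v^\pm_{j(k)}$ for $k\le n_0-1$ genuinely fail to determine membership in $J$; this is what forces the $n_0$-th block of $r(x)$ to enter the defining equation of $H(y^*)$ nontrivially. So the outgoing-coefficient vector of $H$ is not identically zero as a function of the fixed earlier directions. Next I would let $w_0$ itself vary over the (open, nonempty by Lemmas \ref{nzero-essential}--\ref{no-distinguish}) set of admissible neutral vectors and track how $\mathcal{P}'=\mathcal{P}'(y^*,w_0)$ and $S=S(y^*,w_0)$ move. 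Since $\text{dim}S(y^*,w_0)=3$ for the typical $w_0$ (this is exactly the situation we arranged just before the statement, using Lemma \ref{high-dim}), and since the affine plane $\mathcal{P}$ contains the rotating line $L(0)$ through the base collision configuration together with the direction $R_{n_0}(w_0)$, varying $w_0$ sweeps $\mathcal{P}'$ — and hence $S$ — through a positive-dimensional family of $3$-dimensional subspaces of $\mathbb{R}^\nu$. If for \emph{every} such $w_0$ the outgoing-coefficient functional of $H(y^*)$ vanished on the corresponding $S(y^*,w_0)$, then that single fixed functional (it does not depend on $w_0$, only on the frozen earlier directions) would annihilate the union of all these $3$-spaces, which spans a subspace of $\mathbb{R}^\nu$ of dimension $\ge 4$ — and in fact I expect one can push the sweep to show it spans all of $\mathbb{R}^\nu$ when $\nu\ge 3$, or at least enough of it to force the functional to be zero. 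That contradicts the nonvanishing established in the previous step.

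The main obstacle, and the step I would spend the most care on, is the geometric bookkeeping of how $S(y^*,w_0)$ depends on $w_0$ — specifically, verifying that the family $\{S(y^*,w_0)\}$ is genuinely ``fat enough'' that no nonzero linear functional can annihilate all members simultaneously. This requires understanding the $R_{n_0}$-image of the neutral space $\mathcal{N}_0(V_0(y^*);g_1(y^*),\dots,g_{n_0-1}(y^*))$ and how the extra direction contributed by $w_0\notin\mathcal{N}_0(\dots,g_{n_0})$ rotates $\mathcal{P}$; here I would lean on the Direction Determination Principle (Proposition \ref{DDP}) and on Proposition \ref{variations} to see that the admissible $w_0$'s are not confined to a low-dimensional sliver, together with Lemma \ref{high-dim} to rule out the degenerate ``head-on'' locus. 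A secondary technical point is to handle the genericity/typicality qualifiers uniformly: one shrinks $U_0$ (equivalently $B_0$) once more so that the dimension counts (\ref{typ-dim1})--(\ref{typ-dim2}) and the identity $\text{dim}S=3$ hold on the nose, and then the dependence of $H(y^*)$, $\mathcal{P}'$, $S$ on $(y^*,w_0)$ is analytic, so the annihilation hypothesis becomes an honest algebraic condition that the preceding dimension count contradicts. Once that is in place, choosing $w_0$ outside the proper analytic subvariety where $\mathcal{P}'\times S\subset H(y^*)$ gives the desired $w_0$ with $\text{dim}\left[(\mathcal{P}'\times S)\cap H(y^*)\right]=4$.
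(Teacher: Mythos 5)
Your overall shape (argue by contradiction, freeze the directions of the first $n_0-1$ relative velocities, vary $w_0$) points in the right direction, but the engine of your contradiction differs from the paper's and, as written, has a genuine gap. You want to force the defining functional of $H(y^*)$ to vanish by showing that $\bigcup_{w_0}S(y^*,w_0)$ is ``fat'' (spans a subspace of dimension $\ge 4$, ideally all of $\mathbb{R}^\nu$). This is precisely the step you flag as the main obstacle, and it fails in general: by Propositions \ref{DDP} and \ref{variations}, as $w_0$ ranges over the admissible neutral vectors the planes $\mathcal{P}'(y^*,w_0)$ sweep out only $R_{n_0}\left[\mathcal{N}_{n_0-1}\right]$, and the spaces $S(y^*,w_0)$ sweep out only $\text{span}\left\{q_{i(n_0)}-q_{j(n_0)},\,R_{n_0}\left[\mathcal{N}_{n_0-1}\right]\right\}$, which can be as small as $3$-dimensional (e.g.\ when $\dim R_{n_0}\left[\mathcal{N}_{n_0-1}\right]=2$, all the $S(y^*,w_0)$ coincide). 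For $\nu\ge 4$ a nonzero functional can then annihilate every $S(y^*,w_0)$ simultaneously, so no contradiction follows from spanning considerations. Moreover, even if you could force the outgoing block of the functional to vanish, that alone is not contradictory: Definition \ref{n-zero}(ii) and Lemma \ref{no-distinguish} only guarantee that the $n_0$-th collision data enter the defining equation of $H$ nontrivially \emph{as a pair}; they do not give the nonvanishing of the outgoing coefficient block specifically, which your argument quietly assumes.

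The paper's proof derives the contradiction from a different source, and this is the idea you are missing. Granting $\mathcal{P}'(w_0)\times\{0\}\subset H$ and $\{0\}\times S(w_0)\subset H$ for all admissible $w_0$, it uses Propositions \ref{DDP} and \ref{variations} to identify the union of the products $\mathcal{P}'(w_0)\times S(w_0)$ with the full range of $r(x)$ as $x$ runs over the leaf $U_0\inter\mathcal{W}_{n_0-1}(y^*)$ (both factors, $R_{n_0}\left[\mathcal{N}_{n_0-1}\right]$ and $\text{span}\left\{q_{i(n_0)}-q_{j(n_0)},\,R_{n_0}\left[\mathcal{N}_{n_0-1}\right]\right\}$, being constant along that leaf). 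Hence $r(x)\in H(y^*)$ for every $x$ in the leaf, so by Proposition \ref{belongs-to-hyperplane} the entire leaf $\mathcal{W}_{n_0-1}(y^*)$ would lie in $J$ --- contradicting the transversality of the $\mathcal{W}_{n_0-1}$-foliation to $J$ that is built into the definition of the critical index $n_0$. In short, the contradiction is with property (ii) of Definition \ref{n-zero} (transversality of the leaves to $J$), not with the nonvanishing of a coefficient block of $H$; replacing your spanning argument with this ``whole leaf inside $J$'' argument is what closes the gap.
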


\begin{rem}
The propery $\text{dim}\left[(\mathcal{P}'\times S)\inter H(y^*)\right]=4$ is an open property and the system 
in which it is defined is algebraic, so either this property holds on an open set with full measure inside the
singularity manifold (and then we can assume that it holds for every singular phase point in the local neighborhood 
$U_0$ that is chosen suitably small), or this property holds nowhere on the singularity manifold. 
In the indirect proof below we will assume the latter.
\end{rem}

\begin{proof}
A proof by contradiction. Assume that for every singular phase point $y^*$ (in $U_0$) and for every choice 
\[
w_0\in\mathcal{N}_0\left(V_0(y^*);\, g_1(y^*),\dots,g_{n_0-1}(y^*)\right)\setminus
\mathcal{N}_0\left(V_0(y^*);\, g_1(y^*),\dots,g_{n_0}(y^*)\right)
\]
the set containment
\[
\mathcal{P}'(w_0)\times S(w_0)\subset H=H(y^*)\subset\mathbb{R}^\nu \times \mathbb{R}^\nu
\]
is true. (The phase point $y^*=S^\tau y_0$ is now fixed.) This means that
\begin{equation}\label{P-part}
\bigcup_{w_0\in \mathcal{N}_0} \mathcal{P}'(w_0)\times\{0\}\subset H,
\end{equation}
and
\begin{equation}\label{S-part}
\bigcup_{w_0\in \mathcal{N}_0} \{0\}\times S(w_0)\subset H,
\end{equation}
where $\mathcal{N}_0=\mathcal{N}_0\left(V_0(y^*);\, g_1(y^*),\dots,g_{n_0-1}(y^*)\right)$.
Here is now the key observation: If we fix the manifold $\mathcal{W}_{n_0-1}(y^*)$, that is,
all the {\it directions} of all the relative velocities
$v^-_{i(k)}-v^-_{j(k)}$ and $v^+_{i(k)}-v^+_{j(k)}$ ($1\le k\le n_0-1$) for a phase point $y^*\in U_0$
and let all the other data vary then, according to Propositions \ref{DDP} and \ref{variations}, we also fix the neutral space
$\mathcal{N}_0=\mathcal{N}_0\left(V_0(y^*);\, g_1(y^*),\dots,g_{n_0-1}(y^*)\right)$, and at any time $t^*$
between $t_{n_0-1}$ and $t_{n_0}$ the data $\delta q$ and $\delta v$ vary in the neutral space
$\mathcal{N}_{n_0-1}\left(V_0(y^*);\, g_1(y^*),\dots,g_{n_0-1}(y^*)\right)$, which is also determined
by the manifold $\mathcal{W}_{n_0-1}(y^*)$, see again Proposition \ref{DDP}.
Therefore, the set containment relations (\ref{P-part})--(\ref{S-part}) mean that
\begin{equation}\label{everything-in-H}
\begin{aligned}
& R_{n_0}\left[\mathcal{N}_{n_0-1}\left(V_0(x);\, g_1(x),\dots,g_{n_0-1}(x)\right)\right] \\
& \times \text{span}\left\{q_{i(n_0)}-q_{j(n_0)},\, R_{n_0}\left[\mathcal{N}_{n_0-1}\left(V_0(x);\,
g_1(x),\dots,g_{n_0-1}(x)\right)\right]\right\}\subset H.
\end{aligned}
\end{equation}
for any phase point $x\in U_0\cap\mathcal{W}_{n_0-1}(y^*)$. We note here that not only the first factor of the
Cartesian product of (\ref{everything-in-H}) is constant on $\mathcal{W}_{n_0-1}(y^*)$, but the second one, as well.
The reason for this is that on $\mathcal{W}_{n_0-1}(y^*)$ the possible variations of the vector
$q_{i(n_0)}-q_{j(n_0)}$ belong to the space $R_{n_0}\left[\mathcal{N}_{n_0-1}\left(V_0(y^*);\, g_1(y^*),\dots,g_{n_0-1}(y^*)\right)\right]$,
see Proposition \ref{variations}.

The last set containment means that for any $x\in U_0\cap\mathcal{W}_{n_0-1}(y^*)$
it is true that $r(x)\in H(x)=H(y^*)$, so $x\in J$ for all such $x$, according to Proposition \ref{belongs-to-hyperplane}.
However, this contradicts to the fact that for the phase points $y\in U_0$ the manifolds $\mathcal{W}_{n_0-1}(y)$
are transversal to $J$, see Definition \ref{n-zero}.
\end{proof}

Our proof of the Theorem will be completed as soon as we prove the following counterpart of 
Proposition \ref{no-hyperplane}.

\begin{prp} \label{counterpart_geometry}
Let $\mathcal{P}'$ be a $2$-dimensional
linear subspace of the Euclidean space $\mathbb{R}^3$, $\mathcal{P}=\mathcal{P}'+x_0$ a coset of $\mathcal{P}'$
not containing $0$, $C_2\subset\mathbb{R}^3$ be the unit sphere of $\mathbb{R}^3$, $C_1\subset\mathcal{P}$ be an ellipse in $\mathcal{P}$,
possibly degenerated to a single point. Assume that the unit sphere $C_2$ intersects the affine plane $\mathcal{P}$ in a circle
$C$ and none of $C_1$ and $C$ lies completely inside the other one. Suppose that $L(s)$, $|s|<\vep_0$, is a smooth family of oriented 
lines in $\mathcal{P}$ with the direction vector $\ol{v}(s)$ satisfying the following conditions:

\begin{enumerate}
\item[$(i)$] $L(s)$ is tangent to $C_1$ at the point of contact $A(s)$, and at $A(s)$ the direction vector $\ol{v}(s)$
agrees with a given orientation of $C_1$ (if $C_1$ is not a point),
\item[$(ii)$] $L(s)$ intersects $C$ in two points, out of which the one whose position vector makes the
smaller inner product with $\ol{v}(s)$ is denoted by $B(s)$,
\item[$(iii)$] $\dfrac{d}{ds}\alpha\left(\ol{v}(s)\right)>0$ for all $s$, $|s|<\vep_0$.
\end{enumerate}
Here $\alpha(\ol{v}(s))$ denotes the direction angle of the vector $\ol{v}(s)$. Finally, let $\ol{v}^+(s)$ be the mirror 
image of $\ol{v}(s)$ under the orthogonal reflection across the tangent plane of the unit sphere $C_2$ at the point $B(s)$.

We claim that there is no ($4$-dimensional) hyperplane $H\subset \mathcal{P}'\times\mathbb{R}^3$ containing all the points
$(\ol{v}(s),\, \ol{v}^+(s))$ for $|s|<\vep_0$.
\end{prp}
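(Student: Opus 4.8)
The plan is to reduce Proposition~\ref{counterpart_geometry} to the already-established two-dimensional statement (Proposition~\ref{no-hyperplane}) by a careful projection argument, and then to deal separately with the genuinely new phenomenon: the fact that the reflection plane of the sphere $C_2$ at the point $B(s)$ is \emph{not} contained in the affine plane $\mathcal{P}$, so the outgoing vector $\overline{v}^+(s)$ genuinely leaves $\mathcal{P}'$. First I would set up coordinates on $\mathbb{R}^3$ so that $\mathcal{P}'=\text{span}\{e_1,e_2\}$ and $x_0$ is a positive multiple of $e_3$; then the circle $C=C_2\cap\mathcal{P}$ has some radius $\rho<1$ and center $x_0$, and every line $L(s)\subset\mathcal{P}$ is tangent to $C_1$ and crosses $C$. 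Decompose $\overline{v}^+(s)=\pi(\overline{v}^+(s))+c(s)e_3$, where $\pi$ is the orthogonal projection onto $\mathcal{P}'$. The key elementary computation is that $\pi(\overline{v}^+(s))$ is exactly the mirror image of $\overline{v}(s)$ across the tangent line to the circle $C$ (inside $\mathcal{P}'$) at $B(s)$ — this is because reflecting across the tangent plane of the sphere and then projecting to $\mathcal{P}'$ agrees with reflecting across the tangent line of the circle, as the sphere's normal at $B(s)$ is $B(s)-x_0$, whose $\mathcal{P}'$-component is the circle's normal. Hence the pair $(\overline{v}(s),\pi(\overline{v}^+(s)))$ is \emph{precisely} of the form treated by Proposition~\ref{no-hyperplane}, with the circle $C$ playing the role of $C_2$ there.

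Next I would argue the contradiction. Suppose a $4$-dimensional hyperplane $H\subset\mathcal{P}'\times\mathbb{R}^3$ contains all $(\overline{v}(s),\overline{v}^+(s))$. Composing with the linear map $\mathrm{id}\times\pi:\mathcal{P}'\times\mathbb{R}^3\to\mathcal{P}'\times\mathcal{P}'$, the image $(\mathrm{id}\times\pi)(H)$ is a subspace of $\mathcal{P}'\times\mathcal{P}'\cong\mathbb{R}^4$ of dimension $\le 4$ containing all $(\overline{v}(s),\pi(\overline{v}^+(s)))$. If that image is all of $\mathbb{R}^4$ there is no constraint from the projected data, so I first handle the easy branch: if $\ker(\mathrm{id}\times\pi)\not\subset H$, i.e. the line $\{0\}\times\mathbb{R}e_3$ is not in $H$, then $H$ meets $\mathcal{P}'\times\mathcal{P}'$ in a $3$-dimensional hyperplane of $\mathbb{R}^4$ containing all the projected points, and Proposition~\ref{no-hyperplane} (applied to $C_1$ and the circle $C$, which by hypothesis have at least two common tangents) gives a contradiction outright. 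So we are reduced to the case $\{0\}\times\mathbb{R}e_3\subset H$; then $H=\mathcal{P}'\times V$ for some $3$-dimensional subspace $V\subset\mathbb{R}^3$ with $e_3\in V$, and the condition becomes simply $\overline{v}^+(s)\in V$ for all $s$. Writing $V=\{w:\langle w,\mathbf n\rangle=0\}$ with $\mathbf n\perp e_3$, i.e. $\mathbf n\in\mathcal{P}'$, this says $\langle\overline{v}^+(s),\mathbf n\rangle\equiv 0$, and since $\mathbf n\in\mathcal{P}'$ we have $\langle\overline{v}^+(s),\mathbf n\rangle=\langle\pi(\overline{v}^+(s)),\mathbf n\rangle$, so the \emph{planar} reflected vectors $\pi(\overline{v}^+(s))$ all lie in a fixed line of $\mathcal{P}'$ — which is absurd, since as $L(s)$ rotates (condition (iii)) and sweeps out half the unit circle of directions, the reflections of $\overline{v}(s)$ across the moving tangent line to $C$ certainly do not stay in one line. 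This last point can be quoted almost verbatim from the end of the proof of Proposition~\ref{no-hyperplane}: the difference $\pi(\overline{v}^+(s))-\overline{v}(s)$ is a nonzero multiple of the rotating normal of $C$ at $B(s)$, hence not of constant direction.

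For the exceptional cases I would mirror the treatment already given for Proposition~\ref{no-hyperplane}. If $C_1$ degenerates to a point (the subcase of Remark~\ref{degeneracy-warning}), the lines $L(s)$ rotate about a fixed point $A\in\mathcal{P}$, and the analysis goes through with the common tangent hypothesis replaced by the requirement that $A$ lie outside the closed disk bounded by $C$; the analytic-extension step then extends the family to endpoint parameters where $\overline{v}(a)=-\overline{v}(b)$, and the limiting argument with the factor $(s-a)^{-1/2}$ applies word for word inside $\mathcal{P}'$, producing the same contradiction. If $C_1$ and $C$ have no two non-parallel common tangents, I again use the one-sided limit trick from Proposition~\ref{no-hyperplane} to extract the direction $\xi\perp\overline{v}(a)$ and then force the vectors $\eta(s)=\overline{v}(s)-\langle\pi(\overline{v}^+(s)),\overline{v}(a)\rangle\,\overline{v}(a)$ to be mutually parallel, contradicting $\text{dist}(\overline{v}(s),\eta(s))\le 1$ together with the fact that the $\overline{v}(s)$ fill a half-circle.

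The main obstacle I expect is the first bookkeeping step — verifying cleanly that orthogonal reflection of $\overline{v}(s)$ across the tangent \emph{plane} of the sphere, followed by projection to $\mathcal{P}'$, coincides with reflection across the tangent \emph{line} of the section circle — together with being careful about which of the two intersection points of $L(s)$ with $C$ (selected by the inner-product-with-$\overline{v}(s)$ rule in (ii)) is the relevant one, since the two sheets of the sphere give reflections with opposite-sign $e_3$-components. Once that normal-vector decomposition is pinned down, everything else is a repackaging of the $\nu=2$ argument, so no essentially new geometric input beyond Proposition~\ref{no-hyperplane} is needed.
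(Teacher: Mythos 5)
Your projection plan has two defects, and the second one is a genuine gap. First, the advertised ``key elementary computation'' is false. Put $\mathcal{P}'=\mathrm{span}\{e_1,e_2\}$, $x_0=he_3$ with $0<h<1$, and write $B(s)=u(s)+he_3$ with $u(s)\in\mathcal{P}'$, $\|u(s)\|=\rho=\sqrt{1-h^2}$. Since $C_2$ is the unit sphere, the reflection across its tangent plane at $B(s)$ gives $\ol{v}^+(s)=\ol{v}(s)-2\langle\ol{v}(s),u(s)\rangle\,(u(s)+he_3)$, hence $\pi(\ol{v}^+(s))=\ol{v}(s)-2\langle\ol{v}(s),u(s)\rangle\,u(s)$: the component of $\ol{v}(s)$ along $u(s)$ is multiplied by $1-2\rho^2$, not by $-1$. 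So $\pi(\ol{v}^+(s))$ is \emph{not} the mirror image of $\ol{v}(s)$ in the tangent line of $C$ unless $h=0$, which is excluded because $0\notin\mathcal{P}$. This particular error is repairable: one has $\pi(\ol{v}^+)=h^2\,\ol{v}+\rho^2 R_C(\ol{v})$, where $R_C$ is the in-plane reflection, and the invertible linear map $(x,y)\mapsto(x,\,h^2x+\rho^2y)$ of $\mathcal{P}'\times\mathcal{P}'$ takes hyperplanes to hyperplanes, so ``the projected points lie in a hyperplane of $\mathcal{P}'\times\mathcal{P}'$'' still reduces to Proposition \ref{no-hyperplane}.

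The fatal problem is the case split. Write $H=\{(x,y):\langle x,a\rangle+\langle y,b\rangle=0\}$ with $a\in\mathcal{P}'$, $b\in\mathbb{R}^3$. In your first branch, $\{0\}\times\mathbb{R}e_3\not\subset H$ (i.e.\ $\langle b,e_3\rangle\ne0$), the restriction of $\mathrm{id}\times\pi$ to $H$ is injective, so $(\mathrm{id}\times\pi)(H)$ is \emph{all} of $\mathcal{P}'\times\mathcal{P}'$ and the projected data carry no constraint; the $3$-dimensional intersection $H\cap(\mathcal{P}'\times\mathcal{P}')$ does exist, but there is no reason whatsoever for the projected points $(\ol{v}(s),\pi(\ol{v}^+(s)))$ to lie in it, since the actual points $(\ol{v}(s),\ol{v}^+(s))$ are not in $\mathcal{P}'\times\mathcal{P}'$. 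In your second branch, $\{0\}\times\mathbb{R}e_3\subset H$ only forces $b\in\mathcal{P}'$, not $a=0$, so the conclusion $H=\mathcal{P}'\times V$ is unjustified; what is true in that branch is that $(\mathrm{id}\times\pi)(H)$ is a genuine hyperplane of $\mathcal{P}'\times\mathcal{P}'$ containing the projected points, and then the (corrected) reduction to Proposition \ref{no-hyperplane} finishes it. In other words the two branches are wired backwards, and even after rewiring, the case $\langle b,e_3\rangle\ne0$ --- exactly the case in which the hyperplane genuinely uses the third dimension --- is left with no argument at all. This is precisely the information your projection discards and the paper keeps: the paper never projects, but extends the family analytically to two endpoint parameters where $L$ is tangent to $C$, so that $\ol{v}^+=\ol{v}$ there and $H$ must contain the diagonal $\{(v,v):v\in\mathcal{P}'\}$; then $\dim\Psi(H)\le2$ for $\Psi(v_1,v_2)=v_1-v_2$, while the differences $\ol{v}(s)-\ol{v}^+(s)$ are parallel to the sphere normals $B(s)$ and sweep an open piece of a circular cone in $\mathbb{R}^3$, which cannot lie in a $2$-dimensional subspace. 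To salvage your approach you would need a separate treatment of the unhandled branch, e.g.\ by the paper's cone argument or by analyzing the identity $\langle\ol{v},a+b_\parallel\rangle-2\langle\ol{v},u\rangle\bigl(\langle u,b_\parallel\rangle+h\langle b,e_3\rangle\bigr)\equiv0$ directly.
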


\begin{rem}
In the proposition above the space $\BR^3$ plays the role of the space $S$ of Proposition \ref{good_neutral}.
\end{rem}

\begin{proof}
Very similar to the proof of Proposition \ref{no-hyperplane}. We assume again that $C_1$ and $C_2$ possess at least two
non-parallel tangent lines. (Otherwise the argument discussing the parallelity case $\ol{v}(b)=-\ol{v}(a)$ in the proof of
Proposition \ref{no-hyperplane} applies here with obvious modifications, which are left to the reader.) We can also assume that
the lines $L(s)$ depend on the parameter $s$ analytically, and this analytic family of lines $L(s)\subset\mathcal{P}$ is already extended 
to a parameter interval $I=[a,b]\supset(-\vep_0,\vep_0)$ by keeping the properties (i)---(iii) above, so that $L(a)$ and $L(b)$ are 
non-parallel and tangent to the circle $C=\mathcal{P}\cap C_2$. Suppose there is a hyperplane $H$ in the $5$-dimensional space 
$\mathcal{P}'\times\mathbb{R}^3$ containing all the points $(\ol{v}(s),\, \ol{v}^+(s))$ for $|s|<\vep_0$. By reasons of analyticity, the same 
membership relation $(\ol{v}(s),\, \ol{v}^+(s))\in H$ is true for all $s\in I$. The relations $(\ol{v}(a),\, \ol{v}(a))\in H$,
$(\ol{v}(b),\, \ol{v}(b))\in H$ imply that $H$ contains the diagonal $\left\{(v,v)\big|\; v\in \mathcal{P}'\right\}$,
which diagonal is the kernel of the linear map $\Psi:\; H\to \mathbb{R}^3$, $\Psi(v_1,v_2)=v_1-v_2$. Therefore, since
$\text{dim}H=4$ by our assumption, we get that $\text{dim}\Psi(H)\le 2$. However, for the points 
$(\ol{v}(s),\, \ol{v}^+(s))$, $a<s<b$, the lines spanned by the vectors $\ol{v}(s)-\ol{v}^+(s)$ fill out a (nonempty)
open part of a circular cone of $\mathbb{R}^3$, which cannot be the part of any subspace with dimension $\le 2$, so
the proposition and our non-coincidence theorem are now proved.
\end{proof}

\section{Proof of the Boltzmann-Sinai Ergodic Hypothesis \\ for all hard ball systems} \label{conclusion}

\begin{proof}
We carry out an induction on the number $N$ of elastically interacting
balls. For $N=2$ this is the classic result of Sinai and Chernov
\cite{S-Ch(1987)}. Suppose that $N>2$ and the result (ergodicity, the
Chernov-Sinai Ansatz, and complete hyperbolicity, implying the
Bernoulli mixing property, see \cite{C-H(1996)} and \cite{O-W(1998)})
has been proved for all systems of hard balls (of equal masses) on the
flat $\nu$-torus $\mathbb{T}^\nu$ with the number of balls less than $N$.
According to  Theorem 6.1 of \cite{Sim(1992)-I}, for almost every 
singular phase point $x\in\mathcal{SR}^+_0$ the forward orbit $S^{(0,\infty)}x$ of $x$

\medskip

\begin{enumerate}
\item[$(1)$] contains no singularity, and
\item[$(2)$] contains infinitely many connected collision graphs following each other in time.
\end{enumerate}

By Corollary 3.26 of \cite{Sim(2002)} such forward orbits $S^{(0,\infty)}x$ are sufficient (geometrically hyperbolic),
unless the phase point $x$ belongs to a countable family $J_1,\, J_2,\, \dots$ of exceptional, codimension-one, smooth, non-hyperbolicity
manifolds studied right here in this paper. By our Theorem, all these exceptional manifolds $J_k$ intersect $\mathcal{SR}^+_0$
in zero-measured subsets of $\mathcal{SR}^+_0$, and this proves the Chernov-Sinai Ansatz for our current system with $N$ balls.
Finally, the Theorem of \cite{Sim(2009)} gives us that the considered $N$-ball system is also ergodic, completely hyperbolic,
hence Bernoulli mixing.
\end{proof}

\bigskip

\noindent
Special thanks are due to N. I. Chernov and D. Dolgopyat for their illuminating questions and remarks.

\bigskip


\begin{thebibliography}{S-Sz(1999)}

\bibitem[B-S(1973)]{B-S(1973)} Bunimovich, L. A., Sinai, Ya. G.:
\emph{On the fundamental theorem of dispersing billiards},
Math. Sb. \textbf{90} (1973), 415--431.

\bibitem[C-H(1996)]{C-H(1996)}  Chernov, N. I., Haskell, C.:
\emph{Nonuniformly hyperbolic K-systems are Bernoulli},
Ergod. Th. \& Dynam. Sys. \textbf{16} (1996), 19--44.

\bibitem[K-S-Sz(1990)]{K-S-Sz(1990)} Kr\'amli, A., Sim\'anyi, N., Sz\'asz, D.:
\emph{A ``Transversal'' Fundamental Theorem for Semi-Dis\-pers\-ing Billiards},
Commun. Math. Phys. \textbf{129} (1990), 535--560.

\bibitem[K-S-Sz(1991)]{K-S-Sz(1991)} Kr\'amli, A., Sim\'anyi, N., Sz\'asz, D.:
\emph{The K--Property of Three Billiard Balls},
Annals of Mathematics \textbf{133} (1991), 37--72.

\bibitem[K-S-Sz(1992)]{K-S-Sz(1992)} Kr\'amli, A., Sim\'anyi, N., Sz\'asz, D.:
\emph{The K--Property of Four Billiard Balls},
Commun. Math. Phys. \textbf{144} (1992), 107-148.

\bibitem[O-W(1998)]{O-W(1998)}  Ornstein, D., Weiss, B.:
\emph{On the Bernoulli Nature of Systems with Some Hyperbolic Structure}, 
Ergod. Th. \& Dynam. Sys. \textbf{18} (1998), 441--456.

\bibitem[S-Ch(1987)]{S-Ch(1987)} Sinai, Ya. G., Chernov, N. I.:
\emph{Ergodic properties of certain systems of 2--D discs and 3--D balls},
Russian Math. Surveys \textbf{42:3} (1987), 181-207.

\bibitem[Sim(1992)-I]{Sim(1992)-I}  Sim\'{a}nyi, N.:
\emph{The K-property of N billiard balls I}, 
Invent. Math. \textbf{108} (1992), 521--548.

\bibitem[Sim(1992)-II]{Sim(1992)-II}  Sim\'{a}nyi, N.:
\emph{The K-property of N billiard balls II}, 
Invent. Math. \textbf{110} (1992), 151--172.

\bibitem[Sim(2002)]{Sim(2002)}  Sim\'{a}nyi, N.:
\emph{The Complete Hyperbolicity of Cylindric Billiards}, 
Ergod. Th. \& Dynam. Sys. \textbf{22} (2002), 281--302.

\bibitem[Sim(2003)]{Sim(2003)} Sim\'anyi, N.:
\emph{Proof of the Boltzmann-Sinai Ergodic Hypothesis for Typical Hard Disk Systems},
Invent. Math. \textbf{154:1} (2003), 123-178.

\bibitem[Sim(2004)]{Sim(2004)} Sim\'anyi, N.:
\emph{Proof of the Ergodic Hypothesis for Typical Hard Ball Systems},
Annales Henri Poincar\'e \textbf{5} (2004), 203-233.

\bibitem[Sim(2009)]{Sim(2009)}  Sim\'{a}nyi, N.:
\emph{Conditional proof of the Boltzmann-Sinai ergodic hypothesis}, 
Invent. Math. \textbf{177} (2009), 381--413.

\bibitem[Sin(1963)]{Sin(1963)} Sinai, Ya. G.:
\emph{On the Foundation of the Ergodic Hypothesis for a Dynamical System of Statistical Mechanics},
Soviet Math. Dokl. \textbf{4} (1963), 1818-1822.

\bibitem[Sin(1970)]{Sin(1970)}  Sinai, Ya. G.:
\emph{Dynamical Systems with Elastic Reflections}, 
Russian Math. Surveys \textbf{25:2} (1970), 137--189.

\bibitem[S-Sz(1999)]{S-Sz(1999)}  Sim\'{a}nyi, N., Sz\'{a}sz, D.:
\emph{Hard ball systems are completely hyperbolic}, 
Annals of Mathematics, \textbf{149} (1999), 35--96.

\bibitem[S-Sz(2000)]{S-Sz(2000)}  Sim\'{a}nyi, N., Sz\'{a}sz, D.:
\emph{Non-integrability of Cylindric Billiards and Transitive Lie Group Actions}, 
Ergod. Th. \& Dynam. Sys. \textbf{20} (2000), 593--610.

\end{thebibliography}
\end{document}